\def\be{\begin{equation}}
\def\ee{\end{equation}}
\def\ba{\begin{array}}
\def\ea{\end{array}}
\newtheorem{thm}{Theorem}[section]
\newtheorem{lem}[thm]{Lemma}
\newtheorem{prop}[thm]{Proposition}
\newtheorem{rem}{Remark}
\numberwithin{equation}{section}
\newcommand{\md}{\mathrm{d}}
\def\be{\begin{equation}}
\def\ee{\end{equation}}
\def\br{\begin{eqnarray}}
\def\er{\end{eqnarray}}
\begin{document}
\title{On the Growth in time of Sobolev Norms for Time Dependent Linear Generalized KdV-type Equations \footnote{Supported by NNSFC (No.11790272 and No.11421061) .} }

\author{ $\mbox{Chengming \ Cao}^{\ddag}$ \hspace{12pt}  \  \  Xiaoping Yuan$^{\dag}$\\
${}^{\ddag}\mbox{School of Mathematical Sciences, Fudan University,
Shanghai 200433, P R China}$\\$ \mbox {Email:12110180001@fudan.edu.cn}$\\
$^\dag$ School of Mathematical Sciences, Fudan University,
Shanghai 200433, P R China\\
Email: xpyuan@fudan.edu.cn}

\date{}
\maketitle
 {\bf Abstract}: We give a detailed description in 1-D the growth of Sobolev norms for time dependent linear generalized KdV-type equations on the circle. For most initial data, the growth of Sobolev norms is polynomial in time for fixed analytic potential with admissible growth. If the initial data are given in a fixed smaller function space with more strict admissible growth conditions for $V(x,t)$ , then the growth of previous Sobolev norms is at most logarithmic in time.

{\bf Key words:} Sobolev norms; Time dependent linear generalized KdV-type equation

\section{Introduction}
When consider the growth in time of Sobolev norms for nonlinear Hamiltonian partial differential equations (PDEs), we can choose the linearized equations of these PDEs to study first. The main example discussed before is the nonlinear Schr\"{o}dinger equation on $\mathbb{T}:=\left[-\pi,\pi\right)$ with periodic boundary conditions as follow
\begin{equation}\label{nls}
  \mathrm{i}u_t=\Delta u+\frac{\partial H}{\partial \bar{u}},
\end{equation}
with Hamiltionian
\begin{equation}\label{ham1}
  H = \int_{\mathbb{T}} \left[|\nabla \phi|^2+F(|\phi|^2)\right],
\end{equation}
where $F$ is a polynomial or smooth function and $\phi(x)=u(0,x)$ is the initial data. Split the data $\phi$ in low and high Fourier modes as
\begin{equation}
  \phi=\phi_1+\phi_2,
\end{equation}
where
\begin{equation}
  \phi_1=\Pi_J\phi = \sum_{|j|\leq J}\hat{\phi}(j)\mathrm{e}^{\mathrm{i}jx}.
\end{equation}

Writing
\begin{equation}
  u = w+v
\end{equation}
with $w$ satisfying the initial value problem
\begin{equation}
  \begin{cases}
     &\mathrm{i}\partial_t{w}=\Delta w+\frac{\partial H}{\partial \bar{w}}\\
     &w(0)=\phi_1
  \end{cases}
\end{equation}
and $v$ satisfying the initial value problem
\begin{equation}
  \begin{cases}
  &\mathrm{i}\partial_t{v}=\Delta v+\frac{\partial^2 H}{\partial \bar{w}\partial w}v+\frac{\partial^2 H}{\partial w^2}\bar{v}+F(w,\bar{w},v,\bar{v})\\
  &v(0)=\phi_2
  \end{cases}
\end{equation}

It can be turned out that $F(w,\bar{w},v,\bar{v})$ is a high order term expected to have a small effect and $(\mathrm{i}\partial_t+\Delta)^{-1}$ has a smoothing effect on the term $\frac{\partial^2 H}{\partial \bar{w}^2}\cdot \bar{v}$. Finally we can use the fact that the flow of the linear equation
\begin{equation}
  \begin{cases}
  &\mathrm{i}\partial_t{v}=\Delta v+\frac{\partial^2 H}{\partial \bar{w}\partial w}v\\
  &v(0)=\phi_2
  \end{cases}
\end{equation}
conserves the $L^2$- norm and has essentially unitary behavior in $H^s$ (up to lower-order error terms) since $\frac{\partial^2 H}{\partial \bar{w}\partial w}$ is real. Therefore, the growth in time of Sobolev norms for the linearized equation can be considered firstly.

Bourgain proved an surprising result in the Appendix of \cite{Bourgain1999Global} using Floquet theory, the linear Schr\"{o}dinger equation of the form
\begin{equation}\label{S}
  \mathrm{i} u_t+\Delta u+ V(x,t)u = 0
\end{equation}
with periodic boundary conditions satisfying polynomial growth, where $V$ is time dependent and smooth.

In 2008 \cite{W2009Logarithmic}, Wang obtain the result of logarithmic growth for the equation (\ref{S}). Essentially, it is there proved that in a period of time, the $H^s$ norm of the high frequencies part is preserved. Besides, using localization properties of eigenfunction, the approximate sulution can be constructed by Floquet solution , then the middle frequencies part is controlled.

For other nonlinear Hamiltonian PDEs, in 1996 \cite{Bourgain1996On}, Bourgain mentioned the generalized KdV-type equations in the periodic case of the form
\begin{equation}\label{K}
  u_t+u_{xxx}+\partial_xf'(u)=0,
\end{equation}
in the remark of his study, where $f$ is sufficiently smooth.

The previous study about growth of Sobolev norms enable us to consider the linearised KdV-type equation,
\begin{equation}\label{equ}
  u_t+u_{xxx}+\frac{1}{2}V_x(x,t)u+V(x,t)u_x=0,
\end{equation}
on $\mathbb{T}:=\left[-\pi,\pi\right)$ with periodic boundary conditions. The potential $V(x,t)$ is time-dependent, bounded and real. We further assume that $V(x,t)$ is real analytic in $(x,t)$ in a strip $D:=(\mathbb{R}+\mathrm{i} \rho)^2 (|\rho|<\rho_0,\rho_0>0)$. Here $\frac{1}{2}$ is used to confirm that the flow of (\ref{equ}) conserves the $L^2$- norm.

Since the nonlinear term $\frac{1}{2}V_x(x,t)u+V(x,t)u_x$ has derivative term $u_x$, it leads to complication in the proof of polynomial growth and the control of high frequencies part. Hence, some admissible growth conditions for $V(x,t)$ is necessary. What is more, If we want to obtain logarithmic growth, then the initial data must be given in a fixed smaller function space with more strict admissible growth conditions for $V(x,t)$, such that the  $H^s$ norm of the high frequencies part in our case is linear growth in a period of time. The method we use to control the $H^s$ norm of the low frequencies part is iteration like \cite{Bourgain1999Global}, which makes the $H^s$ norm of this part is also linear growth. Essentially, in a period of time, the $H^s$ norm of sub-high frequencies in the decomposition of low frequencies part is preserved. With interpolation and the estimate of polynomial growth, we obtain logarithmic growth.

Then, we prove the following result:

\begin{thm}
For all $s>0$ and the initial condition $u_0=u(0)\in H^s(\mathbb{T}),\|u_0\|_{L^2(\mathbb{T})}\leq 1$, there exists $C,\widehat{C}$ such that
\begin{equation}\label{poly}
  \|u(t)\|_{H^s}\leq C^s(s+1)!(|t|^s+1)\|u(0)\|_{H^s}
\end{equation}
where $u(t)$ is the solution to (\ref{equ}), provided that the admissible growth condition imposed on $V(x,t)$
\begin{equation}\label{grcon}
  \int_0^t\|V_x(x,\tau)\|_{\infty} \md \tau \leq \widehat{C}\cdot s\log(|t|+2).
\end{equation}

Moreover, for the initial datum $u_0\in H^{s+1}(\mathbb{T})\cap H^{s}(\mathbb{T})$ satisfying
\begin{equation}\label{scon}
  \|u_0\|_{H^{s+1}}\leq \widetilde{C},
\end{equation}
and
\begin{equation}\label{grcon2}
  \int_0^t\|V_x(x,\tau)\|_{\infty} \md \tau \leq \overline{C}\log(|t|+2),
\end{equation}
there exists $\varsigma>3$ and $C_s$, such that
\begin{equation}\label{log}
  \|u(t)\|_{H^{s}}\leq C_s[\log(|t|+2)]^{\varsigma s}\|u(0)\|_{H^{s}}.
\end{equation}
\end{thm}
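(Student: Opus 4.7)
The plan is as follows. First, for both assertions I would set up the energy framework by noting that $B:=\tfrac{1}{2}V_x+V\partial_x$ is $L^2$-anti-self-adjoint (a one-line integration by parts using that $V$ is real), while $\partial_x^3$ is anti-self-adjoint as well, so the flow of (\ref{equ}) is $L^2$-unitary and $\|u(t)\|_{L^2}=\|u_0\|_{L^2}$. This is the base case for all further estimates and will be reused when interpolating at the end.

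For the polynomial bound (\ref{poly}) I would pass to the interaction picture: set $w(t)=e^{t\partial_x^3}u(t)$, so that $w$ satisfies $\partial_t w=-\widetilde B(t)w$ with Fourier kernel $\widetilde B_{n,m}(t)=e^{-it(n^3-m^3)}\cdot\tfrac{i(n+m)}{2}\hat V(n-m,t)$. The analyticity of $V$ in the strip $D$ gives $|\hat V(n-m,t)|\lesssim e^{-\rho_0|n-m|}$, while the phase $n^3-m^3=(n-m)(n^2+nm+m^2)$ is large whenever $n\ne m$. Iterating Duhamel and integrating by parts in time at each step trades the factor $(n+m)$ against $(n^3-m^3)^{-1}$, so that after $O(s)$ iterations the iterated kernel is bounded on $L^2$ and $L^2$ conservation closes the estimate. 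The factorial prefactor $C^s(s+1)!$ then arises from Cauchy estimates $\|V^{(k)}\|_{L^\infty}\lesssim k!/\rho_0^k$ used in the Leibniz expansion of $[\partial_x^s,B]$, and the hypothesis (\ref{grcon}) converts the accumulated time integrals $\int_0^t\|V_x\|_\infty\,d\tau$ into the $|t|^s$ factor.

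For the logarithmic bound (\ref{log}) I would follow Wang's Schr\"odinger strategy in \cite{W2009Logarithmic} and adapt it to the present KdV-type setting. Choose a time step $T\asymp\log(|t|+2)$ and a frequency cutoff $N=N(T)$ of polynomial size in $T$, and decompose $u=\Pi_{>N}u+\Pi_{\le N}u$. On the high-frequency piece, analyticity of $V$ forces $[\Pi_{>N},B]$ to be smoothing with an exponentially small operator norm, so $\Pi_{>N}u$ essentially satisfies an autonomous Airy evolution whose $H^s$ norm is preserved up to a linearly growing error over each block $[kT,(k+1)T]$; it is here that the stronger data hypothesis (\ref{scon}) and the sharper growth hypothesis (\ref{grcon2}) are used to replace the $s$-dependent blow-up in (\ref{grcon}) by a clean linear estimate. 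On the low-frequency piece I would run the Bourgain iteration from \cite{Bourgain1999Global}: partition $\Pi_{\le N}u$ into sub-high and sub-low layers, show that the sub-high $H^s$ norm is almost preserved block by block through a further application of the oscillation/commutator trick at a finer scale, and handle the finite-dimensional sub-low layer by a direct ODE bound. Summing over the $|t|/T$ blocks gives at most linear growth of $\|u(t)\|_{H^s}$, which is still polynomial in $|t|$; interpolating this linear-per-block bound against the polynomial bound (\ref{poly}) and optimizing in the cutoff $N$ produces $[\log(|t|+2)]^{\varsigma s}$ for some $\varsigma>3$.

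The main obstacle I expect is the first-order character of $B$: unlike the pure multiplication potential in (\ref{S}), the term $V\partial_x$ raises regularity, so $[\Pi_{>N},B]$ is not automatically smoothing and is brought under control only through the quantitative exponential decay of $\hat V$ stemming from the analyticity of $V$ in the strip. Tracking this commutator, the accumulated errors across the $\asymp|t|/\log(|t|+2)$ iteration blocks, and the dependence of all constants on $s$ with the precise combinatorial weights required to match $C^s(s+1)!$ in the first part and $[\log(|t|+2)]^{\varsigma s}$ with $\varsigma>3$ in the second part is where the most delicate technical work lies.
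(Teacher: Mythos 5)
Your overall architecture (an $L^2$-unitary flow, a direct polynomial bound, then a low/high frequency decomposition with commutator smoothing on the high modes and a Bourgain-style iteration on the low modes, closed by interpolation with the $L^2$ bound) matches the paper's, and your diagnosis of the main difficulty --- the first-order term $V\partial_x$ --- is correct. But there is one genuine gap: the intermediate frequency range. After the cutoffs one must control $\|\Pi_{J/4}S(t)(\Pi_{J/2}-\Pi_{2J_0})u_0\|_{H^s}$ with $J_0\sim(\log T)^\sigma$ and $J=T^{10s}$, i.e.\ frequencies from $(\log T)^\sigma$ all the way up to $T^{10s}$. In that range the commutator $[V,\Pi_J]$ gives no smallness (the gain is only $O(s!/J)$ relative to the top cutoff $J$), and a direct energy estimate still produces polynomial growth, so your ``further application of the oscillation/commutator trick at a finer scale'' does not close. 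The paper's actual mechanism is Floquet theory: it periodizes and band-limits $V$ (to $V_1$, $V_2$), passes to the quasi-energy operator $H=\mathrm{diag}(j^3-n/T)-A\ast-B\ast$ on $\ell^2(\mathbb{Z}^2)$ restricted to a finite box $\Lambda$, and proves (Proposition 3.1) that every eigenfunction is exponentially localized either near $j=0$ or in an annulus $\bigl||j|-|j_0|\bigr|\le(\log T)^\sigma$; expanding the data in these localized approximate Floquet solutions and performing a dyadic decomposition yields the key uniform-in-time bound (\ref{middle}), $\|\Pi_{J/2}S(t)\phi\|_{H^s}\le C^s\|\phi\|_{H^s}$. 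Nothing in your proposal supplies a substitute for this localization step, and without it the logarithmic bound fails.

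Two smaller points. For the polynomial bound the paper does not use the interaction picture or Duhamel iteration with non-stationary phase at all: Lemma 2.2 is a direct energy estimate --- Leibniz expansion of $\partial_x^s(\frac12 V_xu+Vu_x)$, one integration by parts to kill the top-order term $V\partial_x^{s+1}u$, Cauchy estimates $\|\partial_x^mV\|_\infty\le C^{m+1}m!$, the interpolation (\ref{intp}), and a Gronwall argument in which (\ref{grcon}) turns $\exp(\int_0^t\|V_x\|_\infty)$ into a power of $|t|$. Your oscillatory-integral route is not obviously wrong but is underspecified (it addresses $L^2$ boundedness of iterated kernels rather than the $H^s$ growth rate, and the diagonal $n=m$ carries no phase to exploit), and it is unnecessary. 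Finally, the closing interpolation in the paper is between the bound $\|S(0,T)\|_{H^s\to H^s}\le C^s|T|(sJ_0)^s$ taken at $s=\log|T|$ and the $L^2$ conservation (\ref{con}), not against the polynomial bound (\ref{poly}), which is invoked only in the short-time regime $|t|<\mathrm{e}^s$.
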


\section{Polynomial Growth and Error Estimate}

Let $S(t)$ be the flow for equation (\ref{equ}), we prove $S(t)$ conserves the $L^2$- norm.
\begin{lem}
\begin{equation}\label{con}
  \|S(t)\|_{L^2\rightarrow L^2}=1.
\end{equation}
\end{lem}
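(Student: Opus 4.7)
The plan is to derive an energy identity by differentiating $\|u(t)\|_{L^2}^2$ in time and showing that every contribution from the right-hand side of (\ref{equ}) vanishes after integration by parts against the periodic boundary. Formally, for a sufficiently regular solution,
\begin{equation*}
\frac{d}{dt}\int_{\mathbb T} u^2\,\md x = 2\int_{\mathbb T} u\, u_t\,\md x = -2\int_{\mathbb T} u\Bigl(u_{xxx} + \tfrac12 V_x u + V u_x\Bigr)\md x.
\end{equation*}

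First I would dispatch the dispersive term: two integrations by parts give $\int u\, u_{xxx}\,\md x = -\int u_x u_{xx}\,\md x = -\tfrac12\int (u_x^2)_x\,\md x = 0$ by periodicity. Next I would handle the two potential terms \emph{together}, since this is where the coefficient $\tfrac12$ in front of $V_x$ plays its role. One integration by parts gives
\begin{equation*}
\int_{\mathbb T} u\, V u_x\,\md x = \tfrac12 \int_{\mathbb T} V\,(u^2)_x\,\md x = -\tfrac12\int_{\mathbb T} V_x\, u^2\,\md x,
\end{equation*}
which cancels $\tfrac12\int V_x u^2\,\md x$ exactly. Equivalently, the lower-order operator $L:=\tfrac12 V_x + V\partial_x$ is the formal skew-symmetric symmetrization of $V\partial_x$, i.e.\ $L = \tfrac12(V\partial_x + \partial_x V)$, so $\int u\, Lu\,\md x=0$ on the circle. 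Combining the two cancellations yields $\frac{d}{dt}\|u(t)\|_{L^2}^2 \equiv 0$, hence $\|u(t)\|_{L^2}=\|u(0)\|_{L^2}$ and therefore $\|S(t)\|_{L^2\to L^2}\le 1$; since $S(t)$ acts as an isometry on any nonzero datum, the operator norm is exactly $1$.

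There is no substantive obstacle here; the only technical point is regularity, since the pointwise calculation above requires $u(\cdot,t)\in H^3(\mathbb{T})$ to make sense of $u_{xxx}$ and the integrations by parts. I would deal with this in the standard way: first establish the identity for smooth initial data (so that the flow preserves smoothness in view of the analyticity of $V$ and periodicity), and then extend to $u_0\in L^2(\mathbb{T})$ by density using the fact that $S(t)$ is a bounded linear operator on $L^2$ for each $t$ in any bounded time interval. Since the whole argument is a short energy estimate, this lemma serves mainly as a placeholder for the unitarity on $L^2$ that will be used repeatedly in the higher-Sobolev estimates to follow.
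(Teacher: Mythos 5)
Your proof is correct and follows essentially the same route as the paper: differentiate the $L^2$ norm in time, kill $\int u_{xxx}\bar u$ by periodicity, and use one integration by parts to cancel $\tfrac12\int V_x|u|^2$ against $\int V u_x\bar u$, i.e.\ the skew-symmetry of $\tfrac12(V\partial_x+\partial_x V)$. The only cosmetic difference is that you write $u^2$ where the paper pairs against $\bar u$ and takes real parts (the solution is a priori complex-valued); your own symmetrization remark already covers that case, and your regularity/density discussion is a welcome addition the paper omits.
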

\begin{proof}
By integration by part and periodic boundary condition we have
\begin{equation*}
 \begin{aligned}
  &\mathrm{Re}\int_{\mathbb{T}}(u_t+u_{xxx}+\frac{1}{2}V_x(x,t)u+V(x,t)u_x)\bar{u}\md x\\
  =&\mathrm{Re}\int_{\mathbb{T}}\frac{1}{2}\frac{\partial u}{\partial t}|u|^2+\frac{1}{2}V_x|u|^2-\frac{1}{2}V_x|u|^2\md x\\
  =&0,
 \end{aligned}
\end{equation*}
Then we have
\begin{equation*}
  \|S(t)u_0\|_{L^2}=\|u_0\|_{L^2}.
\end{equation*}
\end{proof}
Since $V$ is real analytic in $(x,t)$ and bounded in $D$, we get
\begin{equation}
  \left\|\frac{\partial^mV}{\partial x^m}\right\|_{\infty,\mathbb{T}}\leq C^{m+1}m!,\quad m=0,1,\ldots
\end{equation}
then we have the estimate
\begin{lem}
 \begin{equation}
   \|S(t)\|_{H^s\rightarrow H^s}\leq C^{s}(s+1)!(|t|^s+1).
 \end{equation}
\end{lem}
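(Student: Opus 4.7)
The plan is to prove the estimate by induction on $s$ (taking $s$ an integer first, then recovering arbitrary $s>0$ by Sobolev interpolation together with the $L^2$ conservation of the previous lemma), via a top-order energy identity for $f:=\partial_x^s u$. First, I would apply $\partial_x^s$ to equation (\ref{equ}) and expand with Leibniz. The evolution of $f$ becomes
\begin{equation*}
f_t + f_{xxx} + V f_x + \big(s+\tfrac12\big) V_x f + R_s = 0,
\end{equation*}
where $R_s$ collects all terms in which at least two derivatives land on $V$:
\begin{equation*}
R_s = \sum_{k=2}^{s}\binom{s}{k}(\partial_x^k V)(\partial_x^{s-k+1}u) + \tfrac12\sum_{k=1}^{s}\binom{s}{k}(\partial_x^{k+1}V)(\partial_x^{s-k}u).
\end{equation*}
Multiplying by $\bar f$, integrating on $\mathbb{T}$, and taking the real part kills the dispersive term (three integrations by parts), converts $V f_x$ into $-\tfrac12 V_x|f|^2$ (one integration by parts), and merges with the $(s+\tfrac12)V_x f$ contribution to give the clean identity $\tfrac12\frac{d}{dt}\|f\|_{L^2}^2 = -s\int_{\mathbb T}V_x|f|^2\,\md x - \mathrm{Re}\int_{\mathbb T}R_s\overline{f}\,\md x$, hence
\begin{equation*}
\frac{d}{dt}\|\partial_x^s u\|_{L^2} \le s\|V_x\|_\infty\,\|\partial_x^s u\|_{L^2} + \|R_s\|_{L^2}.
\end{equation*}
The structural reason only a factor of $s$ (not $s^2$) is lost here is the skew-adjointness of the operator $V\partial_x+\tfrac12 V_x$, which is precisely what produced the $L^2$-conservation in the previous lemma.

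Next, I would estimate $\|R_s\|_{L^2}$ using the analyticity bound $\|\partial_x^k V\|_\infty \le C^{k+1}k!$ and the identity $\binom{s}{k}k! = s!/(s-k)!$:
\begin{equation*}
\|R_s\|_{L^2} \le \sum_{k=2}^{s}\frac{s!\,C^{k+1}}{(s-k)!}\|u\|_{H^{s-k+1}} + \tfrac12\sum_{k=1}^{s}\frac{s!\,C^{k+2}(k+1)}{(s-k)!}\|u\|_{H^{s-k}}.
\end{equation*}
The inductive hypothesis bounds each $\|u(\tau)\|_{H^{s-k+1}}$ by $C^{s-k+1}(s-k+2)!(|\tau|^{s-k+1}+1)\|u(0)\|_{H^{s-k+1}}$, and the resulting products $\frac{s!}{(s-k)!}(s-k+2)!$ telescope into the target prefactor $C^s(s+1)!$ after summing the binomial series. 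The base case $s=0$ is exactly the $L^2$ conservation of the previous lemma.

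Finally, Gronwall applied to the differential inequality with integrating factor $\exp\!\big(s\int_0^t\|V_x\|_\infty\,\md\tau\big)$ and the admissible growth hypothesis (\ref{grcon}) produces an integrating factor $\le(|t|+2)^{\widehat Cs^2}$, which for $\widehat C$ small enough (as permitted by the existential quantifier on $\widehat C$ in the theorem) is absorbed into $|t|^s+1$; the inductive estimate of $\|R_s\|_{L^2}$ then contributes the remaining powers of $|t|$ after integration in time from $0$ to $t$. Assembling everything yields the claimed bound $\|u(t)\|_{H^s}\le C^s(s+1)!(|t|^s+1)\|u(0)\|_{H^s}$. I expect the main technical obstacle to be the combinatorial bookkeeping: making the factorial $(s+1)!$ in the conclusion arise precisely from the combination of the analyticity factor $k!$, the binomial $\binom{s}{k}$, and the inductive factor $(s-k+2)!$ is delicate but essentially routine, and tuning $\widehat C$ so that the Gronwall exponent is $\le s$ is the step that genuinely uses the strength of the admissible growth condition.
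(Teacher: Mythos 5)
Your core strategy coincides with the paper's: apply $\partial_x^{s}$, exploit the skew\nobreakdash-adjointness of $V\partial_x+\tfrac12 V_x$ (which you correctly identify as the same mechanism behind the $L^2$ conservation) so that only a $\|V_x\|_\infty$ term survives at top order, bound the commutator remainder via the analyticity estimate $\|\partial_x^kV\|_\infty\le C^{k+1}k!$, and finish with Gronwall under the admissible growth condition. The genuine divergence is in how the lower-order terms are closed. You run an induction on the integer Sobolev index, feeding the already-established bounds on $\|u(\tau)\|_{H^{s-k}}$ into the Duhamel integral; the paper instead stays at the single level $s$, interpolates $\|u(t)\|_{H^{s-\gamma}}\le\|u(t)\|_{H^s}^{1-\gamma/s}\|u(t)\|_{L^2}^{\gamma/s}$ against the conserved $L^2$ norm, and integrates the resulting sublinear (Bernoulli-type) differential inequality — that is exactly the role of the displayed identity for $\partial_t\|u(t)\|_{H^s}^{\gamma/s}$ in the paper. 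The paper's route is self-contained at level $s$ and produces the factor $(s+1)!$ directly from $\sum_\gamma C^{\gamma+2}(\gamma+1)!$; yours avoids the interpolation but requires the combinatorial telescoping of $\binom{s}{k}k!\,(s-k+2)!$ that you acknowledge, plus the (harmless) observation that $u_0\in H^s\cap L^2$ lies in every intermediate space.

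One discrepancy is worth flagging explicitly. You keep the binomial coefficient $\binom{s}{1}=s$ in the Leibniz expansion of $\partial_x^s(Vu_x)$, so your top-order coefficient is $s\|V_x\|_\infty$ and your Gronwall exponent under (\ref{grcon}) is $\widehat C s^2$; the paper's displayed expansion omits the binomial coefficients and obtains coefficient $1$, hence exponent $\widehat C s$. Your proposed remedy — choosing $\widehat C\lesssim 1/s$ — is consistent with the theorem's quantifier order, but it effectively replaces (\ref{grcon}) by the $s$-independent condition (\ref{grcon2}), emptying the factor $s$ in (\ref{grcon}) of content; you should state this trade-off rather than treat it as free. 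Finally, note that in both your argument and the paper's, the Gronwall factor $(|t|+2)^{O(s)}$ multiplies the $|t|^s$ produced by integrating the forcing, so the bound literally obtained is of the form $|t|^{s+\epsilon s}$ rather than $|t|^s$; the paper absorbs this silently into ``a larger $C$,'' which does not work for large $|t|$, so this is a shared looseness of the two writeups rather than a defect specific to yours.
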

\begin{proof}
\begin{equation*}
  \begin{split}
     &\frac{\partial}{\partial t}\|u(t)\|_{H^s}^2\\
=&2\mathrm{Re}\left(\frac{\partial^s}{\partial x^s}u(t),\frac{\partial}{\partial t}\frac{\partial^s}{\partial x^s}u(t)\right)\\
=&-2\mathrm{Re}\left( \frac{\partial^s}{\partial x^s}u(t),\frac{\partial^s}{\partial x^s}\left(\frac{1}{2}V_xu+Vu_x\right)\right)\\
=&-2\mathrm{Re}\left( \frac{\partial^s}{\partial x^s}u(t),\frac{1}{2}V_x\frac{\partial^su}{\partial x^s}+V\frac{\partial^{s+1}u}{\partial x^{s+1}}+\frac{1}{2}\sum_{\substack{\gamma+\beta=s\\ \gamma\geq 1}}\frac{\partial^{\gamma+1}V}{\partial x^{\gamma+1}}\frac{\partial^{\beta}u}{\partial x^{\beta}}+\frac{\partial^{\gamma}V}{\partial x^{\gamma}}\frac{\partial^{\beta+1}u}{\partial x^{\beta+1}}\right)\\
=&-2\mathrm{Re}\left( \frac{\partial^s u}{\partial x^s},V_x\frac{\partial^{s}u}{\partial x^{s}}+\frac{1}{2}\sum_{\substack{\gamma+\beta=s\\ \gamma\geq 1}}\frac{\partial^{\gamma+1}V}{\partial x^{\gamma+1}}\frac{\partial^{\beta}u}{\partial x^{\beta}}+\sum_{\substack{\gamma+\beta=s\\ \gamma\geq 2}}\frac{\partial^{\gamma}V}{\partial x^{\gamma}}\frac{\partial^{\beta+1}u}{\partial x^{\beta+1}}\right).\\
   \end{split}
\end{equation*}
It follows that
\begin{equation}\label{base}
  \frac{\partial}{\partial t}\|u(t)\|_{H^s}\leq\|V_x\|_{\infty}\|u(t)\|_{H^s}+\sum_{\substack{\gamma+\beta=s\\ \gamma\geq 1}}C^{\gamma+2}(\gamma+1)!\|u\|_{H^\beta}+\sum_{\substack{\gamma+\beta=s\\ \gamma\geq 2}}C^{\gamma+1}\gamma!\|u(t)\|_{H^{\beta+1}}
\end{equation}
Using the following interpolation in (\ref{base}):
\begin{equation}\label{intp}
  \|u(t)\|_{H^{s-\gamma}}\leq\|u(t)\|_{H^{s}}^{\frac{s-\gamma}{s}}\|u(t)\|_{L^2}^{\frac{\gamma}{s}}\quad(s>\gamma)
\end{equation}
we have
\begin{equation}
  \begin{split}
  \frac{\partial}{\partial t}\|u(t)\|_{H^s}\leq&\|V_x\|_{\infty}\|u(t)\|_{H^s}+C^3\|u(t)\|_{H^s}^{1-\frac{1}{s}}\|u(t)\|_{L^2}^\frac{1}{s}+C^4\|u(t)\|_{H^s}^{1-\frac{2}{s}}\|u(t)\|_{L^2}^\frac{2}{s}\\
  +&\ldots+C^{\gamma+2}(\gamma+1)!\|u\|_{H^s}^{1-\frac{\gamma}{s}}\|u(t)\|_{L^2}^{\frac{\gamma}{s}}+\ldots+C^{s+2}(s+1)!\|u(t)\|_{L^2}.
  \end{split}
\end{equation}
In view of the admissible growth condition for $V(x,t)$ (\ref{grcon}) and the equation
\begin{equation}
  \frac{\partial}{\partial t}\|u(t)\|_{H^s}^{\frac{\gamma}{s}}=\frac{\gamma}{s}\|u(t)\|_{H^s}^{\frac{\gamma}{s}-1}\frac{\partial}{\partial t}\|u(t)\|_{H^s}\quad(1\leq \gamma \leq s),
\end{equation}
we obtain that
\begin{equation}
  \|u(t)\|_{H^s}\leq C^s(s+1)!(|t|^s+1)\|u(0)\|_{H^s},
\end{equation}
with a larger $C$.
\end{proof}
\begin{rem}
Since $\forall s>0$, we have the polynomial growth estimate, then on the $\mathbb{T}$, for $u_0\in H^{s+1}(\mathbb{T})\bigcap H^s(\mathbb{T})$,
\begin{equation}\label{s+1}
  \|u(t)\|_{H^{s+1}}\leq C^s(s+1)!(|t|^{s+1}+1)\|u(0)\|_{H^{s+1}}.
\end{equation}
\end{rem}

Next, we give an error estimate in a period of time $T$.

Let $\tilde{\phi}\in C^\infty_0[-\pi,\pi]$ be a Gevrey function of order $\alpha$:
\begin{equation}
  \max_{\tau\in[-\pi,\pi]}\left|\frac{\partial^m\tilde{\phi}(\tau)}{\partial\tau^m}\right|\leq C^{m+1}(m!)^\alpha,\quad 1<\alpha<\infty
\end{equation}
satisfying
\begin{equation}\left\{
  \begin{split}
     & 0\leq \tilde{\phi}\leq 1,\\
     & \tilde{\phi}(\tau)=1,\quad |\tau|\leq 1\\
     & \tilde{\phi}(\tau)=0,\quad |\tau|\geq \pi,
  \end{split}\right.
\end{equation}

Let
\begin{equation}
  \phi(t)=\tilde{\phi}\left(\frac{t}{T}\right).
\end{equation}
Define
\begin{equation}
  V_1(x,t)=\sum_{j\in\mathbb{Z}}V(x,T+2\pi j T)\phi(t+2\pi j T).
\end{equation}
Hence $ V_1((x,t)$ is $2\pi$ periodic in $x$, $2\pi T$ periodic in $t$, analytic in $x$, Gevrey in $t$ of order $\alpha,(1<\alpha<\infty)$.
\begin{equation}
   V_1(x,t)=V(x,t),\quad  \partial_xV_{1}(x,t)=\partial_xV(x,t).\quad (|t|\leq T)
\end{equation}

Let $u_1(t)$ be the solution to
\begin{equation}\label{equ2}
  u_t+u_{xxx}+\frac{1}{2}\partial_xV_{1}(x,t)u+V_1(x,t)u_x=0
\end{equation}
with the initial condition $u_1(0)=u_0$, then $u_1(t)=u(t)$, the flow $S(t)$ for (\ref{equ}) can also be used to describe (\ref{equ2}) in $|t|\leq T$.

Moreover, we have the following error estimate,
\begin{lem}
For $|t|\leq T$, there exists $\bar{u}$ and $\zeta$ such that
\begin{equation*}
  \left(\partial_t+\partial_{xxx}+\frac{1}{2}\partial_xV_{1}(x,t)+V_1(x,t)\partial_x\right)\bar{u}=\zeta,
\end{equation*}
where $\bar{u}(0)=u_0$ and $\|\zeta\|_{L^2}\leq \varepsilon(T)$ for all $|t|\leq T$. Then we have the error estimate
\begin{equation}\label{ero}
  \|\bar{u}-u_1\|_{L^2}<\varepsilon(T)|t|\leq \varepsilon(T)T
\end{equation}
for $|t|\leq T$.
\end{lem}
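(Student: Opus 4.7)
The plan is to reduce the inequality to an energy estimate for the difference $w:=\bar{u}-u_1$ and then invoke the $L^2$-isometry of the homogeneous flow of (\ref{equ2}). As a preliminary remark, the computation carried out in Lemma 2.1 depended only on two ingredients: integration by parts (which kills $\partial_{xxx}$ after taking real parts) and the cancellation $\mathrm{Re}\bigl(\tfrac{1}{2}V_x u\cdot\bar u + Vu_x\cdot\bar u\bigr) = \tfrac{1}{2}(V|u|^2)_x$, whose integral over $\mathbb{T}$ vanishes. Since $V_1$ is real, $2\pi$-periodic in $x$ and smooth in $t$, the same computation shows that the propagator $S_1(t,\tau)$ of (\ref{equ2}) is a unitary map on $L^2(\mathbb{T})$; in particular $u_1$ and $\bar{u}$ are well-defined in $L^2$ for $|t|\leq T$.

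Next I would subtract: from the equation satisfied by $\bar{u}$ and the equation (\ref{equ2}) satisfied by $u_1$ one obtains
\begin{equation*}
\left(\partial_t+\partial_{xxx}+\tfrac{1}{2}\partial_xV_{1}+V_1\partial_x\right)w=\zeta,\qquad w(0)=\bar{u}(0)-u_1(0)=0.
\end{equation*}
The cleanest route from here is the energy method. Pairing the equation with $\bar{w}$ in $L^2$, integrating over $\mathbb{T}$ and taking real parts, the contribution of $\partial_{xxx}+\tfrac{1}{2}\partial_xV_{1}+V_1\partial_x$ vanishes by the same cancellation used in Lemma 2.1, so
\begin{equation*}
\frac{1}{2}\frac{d}{dt}\|w(t)\|_{L^2}^2=\mathrm{Re}\int_{\mathbb{T}} \zeta\,\bar{w}\,dx\leq \|\zeta(t)\|_{L^2}\,\|w(t)\|_{L^2},
\end{equation*}
which yields $\frac{d}{dt}\|w(t)\|_{L^2}\leq\|\zeta(t)\|_{L^2}\leq\varepsilon(T)$. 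Integrating from $0$ to $t$ and using $w(0)=0$ gives $\|w(t)\|_{L^2}\leq\varepsilon(T)|t|\leq\varepsilon(T)T$, which is exactly (\ref{ero}). Equivalently, one may appeal to Duhamel's formula $w(t)=\int_0^t S_1(t,\tau)\zeta(\tau)\,d\tau$ and the unitarity of $S_1(t,\tau)$.

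Since everything reduces to the unitary behavior already established in Lemma 2.1 and a one-line Gronwall-type integration, there is essentially no analytical obstacle. The only point that deserves mild care is the justification that the computation in Lemma 2.1 transfers from $V$ to $V_1$; this is immediate because $V_1$ inherits the relevant regularity (real analytic in $x$, Gevrey in $t$) and reality from $V$ by construction. The actual construction of the approximate solution $\bar{u}$ and the quantitative dependence of $\varepsilon(T)$ on $T$ are left to later sections; the present lemma is purely the stability statement that any such approximate solution is $L^2$-close to $u_1$ with linear-in-$t$ accumulation of the error.
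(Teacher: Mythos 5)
Your proof is correct and is essentially the paper's argument: the paper simply writes $(\bar{u}-u_1)(t)=\int_0^t S(t)S(\tau)^{-1}\zeta(\tau)\,d\tau$ and invokes the $L^2$-conservation of Lemma 2.1, which is exactly the Duhamel route you mention at the end, and your energy-method derivation is an equivalent reformulation of the same fact. No gap.
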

\begin{proof}
  It follows from (\ref{con}) and
  \begin{equation*}
    (\bar{u}-u_1)(t)=\int_0^tS(t)S(\tau)^{-1}\zeta(\tau)\md \tau.
  \end{equation*}
\end{proof}

\section{Floquet Solutions and Localization Property}
From error estimate (\ref{ero}) we know that if
\begin{equation}
  \varepsilon(T)\leq \frac{1}{T^{\eta}}=\mathrm{e}^{-\eta\log T}\quad(\eta>1)
\end{equation}
or decay in a more rapid rate,
\begin{equation}
  \varepsilon(T)\leq \mathrm{e}^{-(\log T)^{\eta}},
\end{equation}
which can overcome other polynomial growth we meet later. Then we can use $\bar{u}$ to give an estimate in the approximation process.

Since we consider solutions for finite time $|t|\leq T$, we replace $V_1$ by $V_2$ defined as
\begin{equation}\label{v2}
  V_2(x,t)=\sum_{\substack{|j|\leq(\log T)^\sigma\\|n|\leq T(\log T)^\sigma}}\widehat{V}_1(j,n)\mathrm{e}^{\mathrm{i}(jx+\frac{n}{T}t)},
\end{equation}
with the Fourier transform of $V_1$
\begin{equation}
  \widehat{V}_1(j,n)=\int_{-\pi T}^{\pi T}\int_{-\pi}^{\pi}V_1(x,t)\mathrm{e}^{-\mathrm{i}jx}\mathrm{e}^{-\mathrm{i}\frac{n}{T}x}\md x\md t
\end{equation}
satisfying
\begin{equation}\label{decay}
  \begin{split}
      |\widehat{V}_1(j,n)|\leq & C\mathrm{e}^{-c|j|},\quad |j|\geq (\log T)^\delta, \\
      \leq& C\mathrm{e}^{-c|\frac{n}{T}|^{1/\alpha}},\quad |n|\geq T(\log T)^\delta,\\
      \quad &(T\gg 1,0<C,c,\delta<\infty),
  \end{split}
\end{equation}
where $\sigma>2\alpha+\delta>\alpha+\delta>2$. Then
\begin{equation}\label{v11}
  \|V_1-V_2\|_{\infty}\leq \mathrm{e}^{-(\log T)^{\sigma'/\alpha}}\ll\frac{1}{T^p},
\end{equation}
\begin{equation}\label{v12}
  \|\partial_xV_1-\partial_xV_2\|_{\infty}\leq \mathrm{e}^{-(\log T)^{\sigma''/\alpha}}\ll\frac{1}{T^p},
\end{equation}
provided $p<(\log T)^{\sigma''/\alpha-1}$, where $\sigma>\sigma'>\sigma''>2\alpha+\delta$.

For $|t|\leq T$, The approximation (\ref{v11}) and (\ref{v12}) permit us to use Floquet solution to
\begin{equation}\label{equ3}
  u_t+u_{xxx}+\frac{1}{2}\partial_xV_{2}(x,t)u+V_2(x,t)u_x=0.
\end{equation}
Since (\ref{equ3}) is time periodic with period $2\pi T$, any $L^2$ solution can be written as a linear superposition of Floquet solutions of the form
\begin{equation}
  \mathrm{e}^{\mathrm{i}Et}\check{\xi}(x,t),
\end{equation}
where $\check{\xi}(x,t)$ is $2\pi$ periodic in $x$ and $2\pi T$ periodic in $t$:
\begin{equation}
  \check{\xi}(x,t)=\sum_{(j,n)\in \mathbb{Z}^2}\xi(j,n)\mathrm{e}^{\mathrm{i}(jx+\frac{n}{T}t)}
\end{equation}
$E$ is called the Floquet eigenvalue; $E,\xi$ satisfy the eigenvalue equation:
\begin{equation}\label{eigen}
  H\xi = \left[\mathrm{diag}\left(j^3-\frac{n}{T}\right)-A\ast-B\ast\right]\xi = E\xi
\end{equation}
on $\ell^2(\mathbb{Z}^2)$, where $\ast$ denotes convolution:
\begin{equation}
  (A\ast\xi)(j,n)=\frac{1}{2}\sum_{(j',n')\in \mathbb{Z}^2}(j-j')\widehat{V}_2(j-j',n-n')\xi(j',n')
\end{equation}
\begin{equation}
  (B\ast\xi)(j,n)=\sum_{(j',n')\in \mathbb{Z}^2}j'\widehat{V}_2(j-j',n-n')\xi(j',n')
\end{equation}
\begin{equation}\label{v2decay}
   \begin{split}
      \widehat{V}_2(j,n)=&\widehat{V}_1(j,n) \quad \text{if} |j|\leq (\log T)^{\sigma} \text{and} |n|\leq T(\log T)^{\sigma}, \sigma>2\alpha+\delta>2, \\
      =& 0\quad \text{otherwise},
   \end{split}
\end{equation}
and $\widetilde{V}_1$ satisfies (\ref{decay}).

We identify the initial condition $\hat{u}_0\in \ell^2(\mathbb{Z})$ with $\tilde{u}_0\in \ell^2(\mathbb{Z}^2)$, where
\begin{equation}
  \left\{
  \begin{split}
     & \tilde{u}_0(j,0)=\hat{u}_0(j)\\
     & \tilde{u}_0(j,n)=0,\quad n\neq 0.
  \end{split}
  \right.
\end{equation}
Since we only concerned about finite time: $|t|\leq T$, it is sufficient to solve the eigenvalue problem in (\ref{eigen}) in a finite region
\begin{equation}\label{reg}
  \Lambda=\{(j,n)\in\mathbb{Z}^2||j|\leq J(T),|n|\leq DT(\log T)^{\sigma}\},
\end{equation}
where $J(T)>T^s$ depending on $T$ and the Sobolev index $s,D>2\pi$ as in the following proposition, $\sigma>2\alpha+\delta>2$ as in (\ref{v2}).

For any subset $\Phi\subset \mathbb{Z}^2$, define $H_\Phi$ to be the restriction of $H$ to $\Phi$:
\begin{equation}\label{Hlam}
  H_\Phi(n,j;n',j')=\left\{
  \begin{split}
     & H(n,j;n',j'),& (n,j)\text{and} (n',j')\in \Phi\\
     & 0 &\text{otherwise}.
  \end{split}\right.
\end{equation}

We have the following localization property on eigenfunctions of $H_{\Lambda}$.
\begin{prop} Assume
\begin{equation}
  H_{\Lambda}\xi=E\xi,\|\xi\|_{\ell^2(\Lambda)}=1.
\end{equation}
Define
\begin{equation}
  \Omega_0 = \{(j,n)\in \Lambda ||j|\leq 4D(\log T)^\sigma\},\quad(\sigma>2\alpha+\delta>2)
\end{equation}
and for any $(j_0,n_0)\in \Lambda$, define
\begin{equation}
  \Omega'(j_0,n_0) = \{(j,n)\in \Lambda |\left||j|-|j_0|\right|\leq (\log T)^\sigma,|n-n_0|\leq T(\log T)^\sigma\},\quad(\sigma>2\alpha+\delta>2),
\end{equation}
Then for all $\xi$ eigenfunctions of $H_\Lambda$ as in (\ref{Hlam}), $\xi$ satisfies either
\begin{equation}\label{xid1}
  \|\xi\|_{\ell^2(\Lambda\setminus\Omega_0)}\leq \mathrm{e}^{-(\log T)^{(\frac{\sigma''-\sigma}{\alpha})}}
\end{equation}
or
\begin{equation}\label{xid2}
  \|\xi\|_{\ell^2(\Lambda\setminus\Omega')}\leq \mathrm{e}^{-(\log T)^{(\frac{\sigma''-\sigma}{\alpha})}}\quad(\sigma>\sigma''>2\alpha+\delta>2).
\end{equation}
for some $\Omega'=\Omega'(j_0,n_0),(j_0,n_0)\in \Lambda$.
\end{prop}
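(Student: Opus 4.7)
The strategy I would follow treats $H_\Lambda$ as a discrete Schr\"odinger operator with rapidly decaying off-diagonal ``hopping'' and runs a two-step localization argument: fix an eigenpair $(E,\xi)$ of $H_\Lambda$, identify a resonance region $\mathcal{R}_E$ on which $\xi$ must concentrate up to exponentially small corrections, and then show by elementary geometry that $\mathcal{R}_E$ is trapped in either $\Omega_0$ or $\Omega'(j_0,n_0)$.

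Concretely, let $D(j,n):=j^3-n/T$ denote the diagonal part of $H$ and fix a threshold $K$ of size roughly $(\log T)^{\sigma''}$. Set
\[
\mathcal{R}_E:=\{(j,n)\in\Lambda:\ |D(j,n)-E|\leq K\}.
\]
The geometric step uses that $|n|/T\leq D(\log T)^{\sigma}$ on $\Lambda$ by (\ref{reg}), so membership in $\mathcal{R}_E$ forces $|j^3-E|\leq K+D(\log T)^{\sigma}\leq (1+D)(\log T)^{\sigma}$. Since $(j^3)'=3j^2$, for $|j|>4D(\log T)^{\sigma}$ the set of integer $j$'s satisfying this bound has $j$-length $\lesssim (\log T)^{\sigma}/j^2\ll 1$, so at most one such integer $j_0$ is resonant. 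Consequently either no $|j|>4D(\log T)^{\sigma}$ is resonant and $\mathcal{R}_E\subset\Omega_0$, or exactly one $j_0$ is, with $n$-slice centred at $n_0:=T(j_0^3-E)$ and width $\leq 2TK\leq 2T(\log T)^{\sigma}$, giving $\mathcal{R}_E\subset\Omega'(j_0,n_0)$. The mirror $-j_0$-strip present in the definition of $\Omega'$ is in fact unused: its nominal resonance centre $T(-j_0^3-E)$ falls outside the $n$-range of $\Lambda$ once $|j_0|\gtrsim(\log T)^{\sigma}$, but its presence allows a single clean statement.

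For the localization step, on $\Lambda\setminus\mathcal{R}_E$ one has $|D-E|\geq K$, and the eigenequation $(D-E)\xi=(A+B)*\xi$ inverts pointwise,
\[
\xi(j,n)=\frac{[(A+B)*\xi](j,n)}{j^3-n/T-E},\qquad (j,n)\notin\mathcal{R}_E.
\]
I would then run a weighted Combes--Thomas-type resolvent expansion with weight $e^{\tau\rho(j,n)}$, where $\rho$ is the distance from $(j,n)$ to $\mathcal{R}_E$ in the mixed metric $|j|+|n/T|^{1/\alpha}$ naturally dictated by the Fourier decay (\ref{decay}). Iteration of the pointwise identity produces a Neumann-type series in which each resolvent step contributes $K^{-1}$ and the Fourier decay of $\widehat V_2$ bounds the weighted kernel norm. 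Choosing $\tau$ slightly below the intrinsic decay rate of $\widehat V_2$ and $K$ large enough makes the series a contraction; the resulting weighted bound $\|e^{\tau\rho}\xi\|_{\ell^2}\lesssim\|\xi\|_{\ell^2}$, evaluated at a point outside the $(\log T)^{\sigma}$-bloat of $\mathcal{R}_E$ (hence outside $\Omega_0$ or $\Omega'$), yields the claimed smallness $e^{-(\log T)^{(\sigma-\sigma'')/\alpha}}$.

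The main obstacle will be the joint calibration of $K$ and the Combes--Thomas weight $\tau$. The kernel $B*$ carries the unbounded prefactor $j'$ and its weighted operator norm is inflated by $e^{C\tau(\log T)^{\sigma}}$ coming from the hopping range $(\log T)^{\sigma}$; $K$ must dominate this inflation while still being small enough ($\lesssim(\log T)^{\sigma''}$) that the geometric dichotomy above really confines $\mathcal{R}_E$ to a single $j_0$-strip. The Gevrey exponent $1/\alpha$ in the final rate is intrinsic: $\widehat V_2$ decays only Gevrey-fast (not exponentially) in $n$, and it is precisely this trade-off between the $K^{-1}$ resolvent gain per step and the Gevrey kernel loss per step that produces the exponent $(\sigma-\sigma'')/\alpha$.
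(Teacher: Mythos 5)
Your overall architecture---define a resonant set from the diagonal, show by a geometric argument that it is trapped either in $\Omega_0$ or in a single strip $\Omega'(j_0,n_0)$, then kill the eigenfunction off the resonant set by a resolvent expansion---is exactly the paper's. The paper phrases the dichotomy through the size of $|E|$ (either $|E|\leq 10D^3(\log T)^{3\sigma}$, which forces every resonant $j$ to satisfy $|j|\leq 3D(\log T)^\sigma$, or $|E|$ large, which forces all resonant sites into one strip); this is equivalent to your direct count of resonant integers $j$ via $3j^2\gg(\log T)^\sigma$. The paper also replaces your Combes--Thomas series by a short resolvent identity, exploiting that $\widehat V_2$ has compact Fourier support, so hops of $j$-length exceeding $(\log T)^\sigma$ vanish identically and the coupling $\Gamma$ between the resonant block and its complement is already of size $\mathrm{e}^{-c(\log T)^{\sigma-\delta}}$ by (\ref{decay}). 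Those are presentational differences.

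The genuine gap is the one you flag but do not resolve: the kernel of $B\ast$ carries the factor $j'$, which on $\Lambda$ ranges up to $J(T)>T^{s}$, so the operator norm of $B\ast$ is of order $T^{s}$, not merely inflated by $\mathrm{e}^{C\tau(\log T)^{\sigma}}$; no threshold $K\lesssim(\log T)^{\sigma''}$ can dominate it, and your pointwise inversion $\xi=(j^3-n/T-E)^{-1}[(A+B)\ast\xi]$ off $\mathcal{R}_E$ is not a contraction. Relatedly, the true diagonal of $H$ is $j^3-n/T-j\widehat V_2(0,0)$, and the shift $j\widehat V_2(0,0)$ already exceeds $K$ for $|j|\gtrsim(\log T)^{\sigma''}$, so a resonant set defined from $j^3-n/T$ alone does not control the distance from $E$ to the actual diagonal. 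The paper's fix is the algebraic identity $(A\ast+B\ast)\xi(j,n)=j\sum\widehat V_2(j-j',n-n')\xi(j',n')-A\ast\xi(j,n)$: the unbounded factor is moved to the output index $j$; its diagonal part $C_Vj$ is absorbed into the resonance condition (\ref{leq}); the remaining $A\ast$ is bounded by $2\pi\|\partial_xV_2\|_{\infty}$ via Schur; and in the surviving off-diagonal piece the prefactor $j$ is dominated by the cubic diagonal once $|j|$ is large, which is what makes the bound (\ref{inver}) available. Some such device is indispensable---it is precisely the new difficulty of the derivative (KdV-type) coupling over the Schr\"odinger case---and without it your calibration of $K$ against the weighted kernel norm cannot close.
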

\begin{proof}
Since
\begin{equation}
  (A\ast+B\ast)(\xi)(j,n)=j\sum_{(j',n')\in \mathbb{Z}^2}\widehat{V}_2(j-j',n-n')\xi(j',n')-A\ast\xi(j,n),
\end{equation}
for any given $E$, the resonant set $\Omega$ can be defined as
\begin{equation}\label{leq}
  \left|j^3 +C_Vj-\frac{n}{T}-E\right|\leq (\log T)^\sigma,\quad(\sigma>2\alpha+\delta>2),
\end{equation}
if $(j,n)\in \Omega$, where $|C_V|\leq 2\pi\|V_2\|_{\infty}$.

Then
\begin{equation}\label{inver}
  \|(H_{\Lambda \setminus \Omega}-E)^{-1}\|\leq\frac{1}{(\log T)^\sigma-2\pi\|\partial_xV_2\|_{\infty}}\leq \frac{2}{(\log T)^\sigma},
\end{equation}
if
\begin{equation*}
  \|V_2\|_{\infty}<\frac{1}{4\pi}(\log T)^\sigma,\|\partial_xV_2\|_{\infty}<\frac{1}{4\pi}(\log T)^\sigma.
\end{equation*}
Considering that
\begin{equation*}
  \left|\frac{n}{T}\right|\leq D(\log T)^\sigma,|C_V|\leq 2\pi(\log T)^\sigma,
\end{equation*}
we obtain the following two result:

(\textrm{i}) $|E|\leq 10 D^3(\log T)^{3\sigma}$

This leads to $|j|\leq 3D(\log T)^{\sigma}$. So $\Omega\subset \{(j,n)\in\Lambda||j|\leq 3D(\log T)^{\sigma}\}$. Define $B = \Lambda\setminus \Omega,B_0 = \Lambda\setminus \Omega_0, B_0\subset B$. Let $P_B,P_{B_0}$ be projections onto the sets $B,B_0$.

Assume $\xi$ is an eigenfunction with eigenvalue $|E|\leq 10 D^3(\log T)^{3\sigma}$. Then

\begin{equation}\label{eigen1}
  P_B\xi=-(H_B-E)^{-1}P_B\Gamma\xi
\end{equation}
where
\begin{equation}
  \Gamma = H_\Lambda-H_B\oplus H_\Omega.
\end{equation}
So
\begin{equation}
  P_B\xi=-(H_B-E)^{-1}P_B\Gamma P_\Omega\xi.
\end{equation}
Let
\begin{equation}
  \Gamma_0 = H_B-H_B\oplus H_{B\setminus B_0}.
\end{equation}
Then
\begin{equation}
  \begin{split}
     P_{B_0}\xi &= P_{B_0}P_B\xi  \\
     &=-P_{B_0}(H_{B_0}-E)^{-1}P_{B_0}\Gamma P_{\Omega}\xi+P_{B_0}(H_{B_0}-E)^{-1}\Gamma_{0}(H_{B}-E)^{-1}P_B\Gamma P_\Omega\xi,
  \end{split}
\end{equation}
where we used $B_0\subset B$.

Observing that the eigenvalue problem is considered in a finite region $\Lambda$ (\ref{reg}) and the decay of Fourier coefficients (\ref{decay}) (\ref{v2decay}), we have
\begin{equation}\label{offd}
  \|\Gamma \xi\|_{\ell^2}\leq\mathrm{e}^{-c(\log T)^{\sigma-\delta}},\|\Gamma_0 \xi\|_{\ell^2}\leq\mathrm{e}^{-c(\log T)^{\sigma-\delta}}.
\end{equation}
Using (\ref{inver}) on $(H_{B_0}-E)^{-1}$ and $(H_{B}-E)^{-1}$, we obtain
\begin{equation}\label{eigen2}
  \|P_{B_0}\xi\|_{\ell^2}\leq\frac{4\mathrm{e}^{-c(\log)^{\sigma-\delta}}}{(\log T)^{2\sigma}}<\mathrm{e}^{-(\log T)^{\frac{\sigma''-\delta}{\alpha}}}\quad(\sigma>\sigma''>2\alpha+\delta, T\gg 1),
\end{equation}
which is (\ref{xid1}).

(\textrm{ii}) $|E|> 10 D^3(\log T)^{3\sigma}$

This gives
\begin{equation}\label{j}
  |j|\geq 2D(\log T)^{\sigma}.
\end{equation}
So if there exist $(j,n),(j',n')\in \Omega\subset \Gamma, |j|\neq |j'|$, then
\begin{equation*}
  \left|\frac{n-n'}{T}+j^2-j'^2\right|\leq 2(\log T)^\sigma
\end{equation*}
from (\ref{leq}). Using (\ref{j}), this implies
\begin{equation*}
  \begin{split}
     & \left|\frac{n-n'}{T}\right|\geq(|j|+|j'|)(|j|-|j'|)-2(\log T)^\sigma \\
     & \geq (4D-2)(\log T)^\sigma>2D(\log T)^{\sigma}
  \end{split}
\end{equation*}
which is a contradiction from the definition of $\Lambda$. So $|j|=|j'|$ and
\begin{equation}
  \left|\frac{n-n'}{T}\right|\leq 2(\log T)^\sigma < 2D(\log)^\sigma
\end{equation}
for $D>2\pi$, if both $(j,n),(j',n')\in \Omega$. (\ref{xid2}) follows by using the same argument as in (\ref{eigen1})-(\ref{eigen2}) with $\Omega'$ replacing $\Omega_0$.
\end{proof}
\begin{lem}
Let $\chi_S$ be the characteristic function of the set $S$:
\begin{equation}
  \chi_S|_S=1,\chi_S|_{\Lambda\setminus S}=0.
\end{equation}
For an eigenfunction $\xi$ satisfying the localization property (\ref{xid2}), let
\begin{equation}\label{pri}
  \xi'(j,n)=\chi_{\Omega'}\xi(j,n),
\end{equation}
then
\begin{equation}\label{appf}
  \mathrm{e}^{\mathrm{i}Et}\check{\xi'}(x,t):=\mathrm{e}^{\mathrm{i}Et}\sum_{(j,n)\in\Omega'}\xi'(j,n)\mathrm{e}^{\mathrm{i}(jx+\frac{n}{T}t)}
\end{equation}
is an approximate Floquet solution of (\ref{equ2}) satisfying the error estimate for $|t|\leq T$
\begin{equation}\label{ero2}
  \|\mathrm{e}^{\mathrm{i}Et}\check{\xi'}(t)-S(t)\check{\xi'}(0)\|_{L^2}\leq T\mathrm{e}^{-\frac{7}{12}(\log T)^{\left(\frac{\sigma''-\delta}{\alpha}\right)}}, \quad(\sigma''>2\alpha+\delta>2)
\end{equation}
where $S(t)$ is the flow of (\ref{equ2}).
\end{lem}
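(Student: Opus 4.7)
The plan is to invoke the error estimate of Lemma 2.3 with the approximate solution $\bar u(x,t) := \mathrm{e}^{\mathrm{i}Et}\check{\xi'}(x,t)$, so it suffices to bound
\begin{equation*}
  \zeta(x,t) := \left(\partial_t + \partial_{xxx} + \tfrac12\partial_x V_1 + V_1\partial_x\right)\bar u
\end{equation*}
in $L^2$ by $\varepsilon(T)\leq \mathrm{e}^{-\frac{7}{12}(\log T)^{(\sigma''-\delta)/\alpha}}$; then Lemma 2.3 produces (\ref{ero2}) at once.

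I would first write $\zeta = L_2\bar u + (L_1-L_2)\bar u$, where $L_2$ denotes the operator of (\ref{equ3}). A direct Fourier computation shows that $\mathrm{e}^{-\mathrm{i}Et}L_2\bar u$ is periodic in both $x$ and $t$ with $(x,t)$-Fourier coefficients equal to $-i(H\xi'-E\xi')(j,n)$, where $\xi'=\chi_{\Omega'}\xi$ is extended by zero off $\Omega'$. Using the eigenvalue relation $H_\Lambda\xi=E\xi$ from Proposition 3.1, one decomposes
\begin{equation*}
  H\xi'-E\xi' = (H-H_\Lambda)\xi - (H-E)(\xi-\xi'),
\end{equation*}
so the $\ell^2$-mass of this residual is governed by (i) the couplings carried by $A\ast,B\ast$ from $\Omega'$ to the complement of $\Lambda$, and (ii) the $\ell^2$ mass of $\xi$ outside $\Omega'$. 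Because $H\xi'-E\xi'$ is supported in a Fourier box of size $O(T(\log T)^\sigma)$ in the $n$-direction, Parseval plus Cauchy--Schwarz give $\|L_2\bar u(\cdot,t)\|_{L^2_x}\lesssim \sqrt{T(\log T)^\sigma}\,\|H\xi'-E\xi'\|_{\ell^2}$ uniformly in $t$.

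Contribution (i) is small because $\widehat V_2$ is supported in $|j|\leq(\log T)^\sigma$ and $|n|\leq T(\log T)^\sigma$, while $\Omega'$ is buried deep inside $\Lambda$; only a thin boundary layer of $\Lambda$ can be reached, and the decay (\ref{decay}) combined with the localization (\ref{xid2}) makes this double-exponentially small. For (ii), the localization bound gives $\|\xi-\xi'\|_{\ell^2}\leq \mathrm{e}^{-(\log T)^{(\sigma''-\delta)/\alpha}}$, while $H-E$ on the relevant Fourier support has norm $O(J(T)^3)$ coming from the diagonal $j^3-n/T$; since $J(T)\leq T^{O(s)}$ and $(\sigma''-\delta)/\alpha>1$, this polynomial factor is annihilated by the super-polynomial gain in the localization estimate. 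The remaining piece $(L_1-L_2)\bar u$ is handled by
\begin{equation*}
  \|(L_1-L_2)\bar u\|_{L^2} \leq \tfrac12\|\partial_x(V_1-V_2)\|_\infty\|\bar u\|_{L^2} + \|V_1-V_2\|_\infty\|\partial_x\bar u\|_{L^2},
\end{equation*}
using (\ref{v11})--(\ref{v12}) and the crude bound $\|\partial_x\bar u\|_{L^2}\lesssim J(T)$, which is $\ll 1/T^p$ for every $p$.

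The main obstacle is the bookkeeping in step (ii): the diagonal part of $H$ produces a factor as large as $J(T)^3\sim T^{3s}$ acting on the leaked mass $\xi-\xi'$, and this together with the $\sqrt{T(\log T)^\sigma}$ factor from the $n$-support of $H\xi'-E\xi'$ must be absorbed by the super-polynomial gain $\mathrm{e}^{(\log T)^{(\sigma''-\delta)/\alpha}}$ of Proposition 3.1. This is precisely why the standing hypothesis $\sigma''>2\alpha+\delta$ is imposed throughout Section 3, and why the exponent $\tfrac{7}{12}$ (strictly less than $1$) in (\ref{ero2}) leaves ample room to absorb all such polynomial and sub-exponential losses. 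Once this accounting is complete, Lemma 2.3 closes the proof.
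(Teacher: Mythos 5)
Your proposal follows essentially the same route as the paper: plug $\bar u=\mathrm{e}^{\mathrm{i}Et}\check{\xi'}$ into the equation, represent the residual in Fourier as $-\mathrm{i}(H\xi'-E\xi')$ plus the $V_1-V_2$ correction, control it via the eigenvalue relation $H_\Lambda\xi=E\xi$, the localization estimate (\ref{xid2}), a crude polynomial operator-norm bound on $H_\Lambda-E$, and the truncation tails (\ref{decay}), (\ref{v11})--(\ref{v12}), then close with Lemma 2.3. The only cosmetic difference is your splitting $H\xi'-E\xi'=(H-H_\Lambda)\xi-(H-E)(\xi-\xi')$ versus the paper's $(\widetilde H-E)\xi'=(H_\Lambda-E)\xi'+\widetilde\Gamma\xi'+(\widetilde A-A)\ast\xi'+(\widetilde B-B)\ast\xi'$, which is equivalent bookkeeping (and note the absorption actually uses $(\sigma''-\delta)/\alpha>2$, which your standing hypothesis $\sigma''>2\alpha+\delta$ does supply).
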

\begin{proof}
Since $\Lambda$ (\ref{reg}) is a finite region, $\|H_{\Lambda-E}\|_{\ell^2\rightarrow\ell^2}$ is controlled by the polynomial growth about $T$, with localization property of $\xi$ (\ref{xid2}), we have
\begin{equation}\label{lam}
  \begin{split}
    \|(H_{\Lambda}-E)\xi'\|_{\ell^2(\Lambda)}\leq &\|H_{\Lambda}-E\|_{\ell^2\rightarrow\ell^2}\|\xi\|_{\ell^2(\Lambda\setminus \Omega')} \\
    \leq & \mathrm{e}^{-\frac{2}{3}(\log T)^{(\frac{\sigma''-\delta}{\alpha})}} ,(\sigma''>2\alpha+\delta>2).
  \end{split}
\end{equation}
Define
\begin{equation}
   \begin{split}
     \widetilde{H}=&\mathrm{diag}\left(\frac{n}{T}-j^3\right)+\widetilde{A}\ast+\widetilde{B}\ast\\
     =&\mathrm{diag}\left(\frac{n}{T}-j^3\right)+(\widetilde{A}-A)\ast+(\widetilde{B}-B)\ast,
   \end{split}
\end{equation}
where
\begin{equation}
  ((\widetilde{A}-A)\ast\xi')(j,n)=\frac{1}{2}\sum_{(j',n')\in \mathbb{Z}^2}(j-j')(\widehat{V}_1-\widehat{V}_2)(j-j',n-n')\chi_{\Omega'}\xi(j',n'),
\end{equation}
\begin{equation}
  ((\widetilde{B}-B)\ast\xi')(j,n)=\sum_{(j',n')\in \mathbb{Z}^2}j'(\widehat{V}_1-\widehat{V}_2)(j-j',n-n')\chi_{\Omega'}\xi(j',n').
\end{equation}
Then
\begin{equation}
  (\widetilde{H}-E)\xi'=(H_\Lambda-E)\xi'+\widetilde{\Gamma}\xi'+(\widetilde{A}-A)\ast\xi'+(\widetilde{B}-B)\ast\xi'
\end{equation}
where
\begin{equation}
  \widetilde{\Gamma}=H-H_\Lambda\oplus H_{\Lambda^c}
\end{equation}
Using (\ref{v11})(\ref{v12}) and (\ref{lam}), we obtain
\begin{equation}
  \|(\widetilde{H}-E)\xi'\|_{\ell^2}\leq 2\mathrm{e}^{-\frac{2}{3}(\log T)^{(\frac{\sigma''-\delta}{\alpha})}} ,(\sigma''>2\alpha+\delta>2).
\end{equation}
Taking (\ref{appf}) into (\ref{equ2}), denoted $\mathrm{e}^{\mathrm{i}Et}\check{\xi'}$ by $\bar{u}$ with initial datum $\bar{u}(0)=\check{\xi'}(x,0)$, then
\begin{equation}
  \begin{split}
     \zeta:=&\left(\partial_t+\partial_{xxx}+\frac{1}{2}\partial_xV_{1}(x,t)+V_1(x,t)\partial_x\right)\bar{u}\\
     =&-\mathrm{i}\mathrm{e}^{\mathrm{i}Et}\sum_{(j,n)\in\mathbb{Z}^2}\left((\widetilde{H}-E)\chi_{\Omega'}\xi\right)\mathrm{e}^{\mathrm{i}(jx+\frac{n}{T}t)}.
  \end{split}
\end{equation}
By Plancherel's identity on $\mathbb{T} $ denoted by $\left[-\pi , \pi \right)$ with periodic boundary condition,
\begin{equation}
\begin{split}
     \|\zeta\|_{L^2(\mathbb{T})}=&\left\|\left(\partial_t+\partial_{xxx}+\frac{1}{2}\partial_xV_{1}(x,t)+V_1(x,t)\partial_x\right)\bar{u}\right\|_{L^2(\mathbb{T})}\\
     \leq&(2DT(\log T)^\sigma+1)\|(\widetilde{H}-E)\xi'\|_{\ell^2}\\
     \leq&\mathrm{e}^{-\frac{7}{12}(\log T)^{(\frac{\sigma''-\delta}{\alpha})}}
\end{split}
\end{equation}
and \textbf{Lemma 2.3.}, we have (\ref{ero2}).
\end{proof}
\section{Estimate of Sobolev Norms on Intermediate Frequencies}
Let $\Pi_J$ be the Fourier multiplier such that
\begin{equation}\label{Pi}
\widehat{\Pi}_J(j)=\left\{
  \begin{split}
     & 1 \quad |j|\leq J/2, \\
     & 2(1-|j|/J) \quad J/2<|j|\leq J,\\
     & 0 \quad j>|J|.
  \end{split}\right.
\end{equation}
Assume that $1<s\leq \log T$, $\|u_0\|_{H^s}=1$  and
\begin{equation}\label{TD}
  J=T^{10s}.
\end{equation}
Then we consider the Sobolev norms on middle frequency
\begin{equation}\label{midf}
  \|\Pi_{J/4}S(t)(\Pi_{J/2}-\Pi_{2J_0})u_0\|_{H^s}
\end{equation}
after two cut-offs below.
\begin{equation}\label{cutoff}
 \begin{split}
  \|S(t)u_0\|_{H^s}&\leq\|\Pi_{J/4}S(t)u_0\|_{H^s}+\|(1-\Pi_{J/4})S(t)u_0\|_{H^s},\\
  &\leq\|\Pi_{J/4}S(t)\Pi_{2J_0}u_0\|_{H^s}\\
  &+\|\Pi_{J/4}S(t)(\Pi_{J/2}-\Pi_{2J_0})u_0\|_{H^s}\\
  &+\|\Pi_{J/4}S(t)(I-\Pi_{J/2})u_0\|_{H^s}+\|(1-\Pi_{J/4})S(t)u_0\|_{H^s},
 \end{split}
\end{equation}
where
\begin{equation}\label{dj0}
  J_0=4D(\log T)^\sigma\quad(\sigma>2\alpha+\delta>2).
\end{equation}
To estimate(\ref{midf}), we need the following lemma
\begin{lem}
Denoted $(\Pi_{J/2}-\Pi_{2J_0})u_0$ by $\phi$ for short, then for
\begin{equation}
  \mathrm{supp}\hat{\phi}\subseteq [-2J,-J_0/2]\cup[J_0/2,2J],
\end{equation}
we have
\begin{equation}\label{middle}
  \|\Pi_{J/2}S(t)\phi\|_{H^s}\leq C^s \|\phi\|_{H^s}.
\end{equation}
\end{lem}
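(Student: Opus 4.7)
The plan is to expand the initial datum $\phi$ in the orthonormal eigenbasis of the self-adjoint operator $H_\Lambda$ from Section~3, propagate each eigenfunction by its approximate Floquet solution (Lemma~3.2), and combine the resulting termwise estimates via the localization property of Proposition~3.1. Concretely, identify $\phi\in L^2(\mathbb{T})$ with $\tilde\phi\in\ell^2(\Lambda)$ concentrated on the slice $\{n=0\}$ by $\tilde\phi(j,0)=\hat\phi(j)$, let $\{\xi_E\}$ be an orthonormal eigenbasis of $H_\Lambda$ on $\ell^2(\Lambda)$, and decompose
\begin{equation*}
\tilde\phi=\sum_E c_E\,\xi_E,\qquad c_E=\langle\tilde\phi,\xi_E\rangle_{\ell^2(\Lambda)},\qquad \sum_E|c_E|^2\sim\|\phi\|_{L^2}^2.
\end{equation*}

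By Lemma~3.2 (together with its obvious analogue for eigenfunctions of type (\ref{xid1}), obtained by replacing $\Omega'$ with $\Omega_0$ in the truncation and repeating the argument verbatim) one has
\begin{equation*}
S(t)\phi\;\approx\;\sum_E c_E\,\mathrm{e}^{\mathrm{i}Et}\,\check\xi'_E(x,t),
\end{equation*}
with $\xi'_E=\chi_{\Omega_0}\xi_E$ for Type~I eigenfunctions and $\xi'_E=\chi_{\Omega'(j_0,n_0)}\xi_E$ for Type~II. Each summand contributes an $L^2$-error of order $T\mathrm{e}^{-\frac{7}{12}(\log T)^{(\sigma''-\delta)/\alpha}}$ by (\ref{ero2}); summed over the at most $|\Lambda|\lesssim J(T)\cdot DT(\log T)^\sigma$ eigenvalues, the cumulative error is still super-polynomially small in $T$ and hence absorbed into the target bound.

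Now apply $\Pi_{J/2}$ and estimate the $H^s$ norm term by term, splitting according to the localization type. For a Type~I eigenfunction ($\Omega_0=\{|j|\le J_0=4D(\log T)^\sigma\}$), $\Pi_{J/2}$ acts trivially since $J_0\ll J/2$, and
\begin{equation*}
\|\Pi_{J/2}\check\xi'_E\|_{H^s}^2\le J_0^{2s}\,\|\xi'_E\|_{\ell^2}^2.
\end{equation*}
Using $\sum_E|c_E|^2\le\|\phi\|_{L^2}^2$ together with the support condition $|j|\ge J_0/2$ on $\hat\phi$, which gives $\|\phi\|_{L^2}^2\le (2/J_0)^{2s}\|\phi\|_{H^s}^2$, the Type~I contribution is at most $4^s\|\phi\|_{H^s}^2$.

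For a Type~II eigenfunction concentrated on $\Omega'(j_0,n_0)$ one has $||j|-|j_0||\le(\log T)^\sigma$ on $\Omega'$, so
\begin{equation*}
\|\Pi_{J/2}\check\xi'_E\|_{H^s}^2\le\bigl(|j_0|+(\log T)^\sigma\bigr)^{2s}\,\|\xi'_E\|_{\ell^2}^2,
\end{equation*}
while the localization forces $|c_E|^2$ to be controlled by the $\ell^2$-mass of $\hat\phi$ on the band $||j|-|j_0||\le(\log T)^\sigma$ up to exponentially small tails. Swapping the order of summation yields
\begin{equation*}
\sum_{\text{Type II}}|c_E|^2\bigl(|j_0|+(\log T)^\sigma\bigr)^{2s}\;\lesssim\;\sum_j|\hat\phi(j)|^2\bigl(|j|+2(\log T)^\sigma\bigr)^{2s}\le (3/2)^{2s}\|\phi\|_{H^s}^2,
\end{equation*}
since $|j|\ge J_0/2=2D(\log T)^\sigma$ on the support of $\hat\phi$. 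The main obstacle lies precisely in this last swap: one must control the multiplicity of Floquet eigenfunctions concentrated near a common $|j_0|$ and turn the eigenfunction-indexed sum into a clean $\hat\phi$-weighted Sobolev sum. This will be handled via the $\ell^2$-orthonormality of $\{\xi_E\}$ combined with Cauchy--Schwarz on the $n=0$ slice, which makes the rectangular array $(\xi_E(j,0))_{E,j}$ behave, modulo exponentially small tails, like a block-banded partial isometry. Combining the Type~I and Type~II contributions with the Floquet approximation error then yields (\ref{middle}) for a suitable absolute constant $C$.
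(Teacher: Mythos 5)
Your overall strategy --- expand $\tilde\phi$ in the eigenbasis of $H_\Lambda$, replace each mode by its truncated approximate Floquet solution via Lemma 3.2, and use the localization of Proposition 3.1 to compare output and input frequencies --- is exactly the paper's. But the step you yourself flag as ``the main obstacle'' is a genuine gap, and the fix you sketch does not close it. The difficulty is not only the coefficient bound $\sum_E w_E|c_E|^2\lesssim\|\phi\|_{H^s}^2$ (which could indeed be handled by absorbing the roughly constant weight $w_E\approx|j_0(E)|^{2s}$ into $\hat\phi$ and applying Parseval in $\ell^2(\Lambda)$); it is the synthesis step before that. At a fixed time $t$ the functions $x\mapsto\check\xi'_E(x,t)$ are \emph{not} orthogonal in $L^2(\mathbb{T})$: orthonormality of $\{\xi_E\}$ lives on $\ell^2(\Lambda)$, i.e.\ on $\mathbb{T}\times\mathbb{T}_T$, and is destroyed by restricting to a time slice. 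So the termwise inequality you use implicitly, $\bigl\|\sum_E c_E\mathrm{e}^{\mathrm{i}Et}\Pi_{J/2}\check\xi'_E\bigr\|_{H^s}^2\le\sum_E|c_E|^2\bigl\|\Pi_{J/2}\check\xi'_E\bigr\|_{H^s}^2$, is unjustified, and replacing it by the triangle inequality plus Cauchy--Schwarz costs a factor $|\Lambda|\sim T^{10s+2}$, which destroys the bound. The ``block-banded partial isometry'' heuristic for the array $(\xi_E(j,0))_{E,j}$ only controls the analysis (coefficient) side, not the recombination of the evolved modes at time $t$.

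The paper closes exactly this gap with a different device: a dyadic decomposition $\phi=\sum_R\phi_R$ in the $x$-frequency. Localization guarantees that output frequencies with $R/2<|j|<2R$ only see input frequencies with $R/4<|k|<4R$, so on each dyadic block the weight $|j|^{2s}$ is pulled out as a constant $\sim R^{2s}$ and what remains is the plain $L^2(\mathbb{T})$ norm of $\sum_{\xi'}(\tilde\phi_R,\xi')\mathrm{e}^{\mathrm{i}Et}\check\xi'$. That $L^2$ norm is then bounded by $2\|\phi_R\|_{L^2}$, not through any orthogonality of the modes, but by approximating the superposition back by the exact flow $S(t)\phi_R$ (the error estimate (\ref{error}) applied with $\phi_R$ in place of $\phi$) and invoking $\|S(t)\|_{L^2\to L^2}=1$ from Lemma 2.1. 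Summing $R^{2s}\|\phi_R\|_{L^2}^2$ over dyadic $R$ recovers $C^{2s}\|\phi\|_{H^s}^2$ since each frequency lies in boundedly many blocks. You should restructure your argument around this return to the exact flow on dyadic pieces; without it, or a genuine substitute for the missing fixed-time orthogonality, the proof does not close.
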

\begin{proof}
   Define $\tilde{\phi}$ as
\begin{equation}
  \tilde{\phi}=\left\{
  \begin{split}
     & \tilde{\phi}(j,0)=\hat{\phi}(j), \\
     & \tilde{\phi}(j,n)=0,\quad n\neq 0.
  \end{split}\right.
\end{equation}
then $\mathrm{supp} \tilde{\phi}\subset \Lambda$, where $\Lambda$ is defined in (\ref{reg}). $|\Lambda|\leq T^{10s+2}$. $\tilde{\phi}\in \ell^2(\Lambda)$. So we can use the eigenfunctions $\xi$ of $H_\Lambda$ to expand $\tilde{\phi}$ as follows
\begin{equation}
  \tilde{\phi}=\sum(\tilde{\phi},\xi)\xi.
\end{equation}
Next, we want to replace $\xi$ by $\xi'$(\ref{pri}). Let $Q=\{\xi'|\xi~ \text{satisfies}~ (\ref{xid2})\}$, we have
\begin{equation}
  \|\tilde{\phi}-\sum_{\xi'\in Q}(\tilde{\phi},\xi')\xi'\|_{\ell^2(\Lambda)}\leq O\left\{\mathrm{e}^{-(\log T)^{\frac{\sigma''-\delta}{\alpha}}}\sqrt{|
  \Lambda|}\|\hat{\phi}\|_{\ell^2}\right\}
\end{equation}
Since $0<s\leq \log T$,
\begin{equation}\label{LamNo.}
   |\Lambda|\leq T^{10s+2}\leq \mathrm{e}^{10(\log T)^2}\quad \text{and} \quad \frac{\sigma''-\delta}{\alpha}>2,
\end{equation}
 we have
\begin{equation}
  \|\tilde{\phi}-\sum_{\xi'\in Q}(\tilde{\phi},\xi')\xi'\|_{\ell^2(\Lambda)}\leq \mathrm{e}^{-\frac{2}{3}(\log T)^{\frac{\sigma''-\delta}{\alpha}}},
\end{equation}
By Plancherel's identity on $\mathbb{T}\times\mathbb{T}_T$, we have
\begin{equation}
  \begin{split}
    & \left\| \mathcal{F}^{-1}\left(\tilde{\phi}-\sum_{\xi'\in Q}(\tilde{\phi},\xi')\xi'\right)(x,\theta) \right\|_{L^2(\mathbb{T}\times\mathbb{T}_T)}\\
    =& \left\| \mathcal{F}^{-1}(\tilde{\phi})(x,\theta)-\sum_{\xi'\in Q}(\tilde{\phi},\xi')\sum_{(j,n)\in \Omega'}\xi'(j,n)\mathrm{e}^{\mathrm{i}(jx+\frac{n}{T}\theta)} \right\|_{L^2(\mathbb{T}\times\mathbb{T}_T)}
  \end{split}
\end{equation}
where $\mathbb{T}_T$ denotes $
\left[-\pi T,\pi T\right)$ with periodic boundary.

So $\phi(x):=\phi(x,0)$ as a function on $L^2(\mathbb{T})$ satisfies
\begin{equation}\label{md1}
  \begin{split}
     & \left\| \phi(x)-\sum_{\xi'\in Q}(\tilde{\phi},\xi')\sum_{(j,n)\in \Omega'}\xi'(j,n)\mathrm{e}^{\mathrm{i}jx} \right\|_{L^2(\mathbb{T})}\\
     & \leq T^{\frac{1}{2}}(\log T)^{\frac{\sigma}{2}}\mathrm{e}^{-\frac{2}{3}(\log T)^{\frac{\sigma''-\delta}{\alpha}}}\\
     & \leq \mathrm{e}^{-\frac{1}{2}(\log T)^{\frac{\sigma''-\delta}{\alpha}}}, \quad(\sigma''>2\alpha+\delta>2)
  \end{split}
\end{equation}
Therefore for $|t|\leq T$, the definition of $\check{\xi}$, \quad (See(\ref{appf}))
\begin{equation*}
  \check{\xi'}(x,t):=\sum_{(j,n)\in\Omega'}\xi'(j,n)\mathrm{e}^{\mathrm{i}(jx+\frac{n}{T}t)}
\end{equation*}
\textbf{ Lemma 3.2.}, (\ref{LamNo.}) and (\ref{md1}) give
\begin{equation}\label{error}
  \begin{split}
     & \left\| S(t)\phi-\sum_{\xi'\in Q}(\tilde{\phi},\xi')\mathrm{e}^{\mathrm{i}Et}\check{\xi'}(t)\right\|_{L^2(\mathbb{T})} \\
      \leq & \left\| S(t)\phi-S(t)\sum_{\xi'\in Q}(\tilde{\phi},\xi')\sum_{(j,n)\in \Omega'}\xi'(j,n)\mathrm{e}^{\mathrm{i}jx}  \right\|_{L^2(\mathbb{T})}\\
     + &\left\|S(t)\sum_{\xi'\in Q}(\tilde{\phi},\xi')\sum_{(j,n)\in \Omega'}\xi'(j,n)\mathrm{e}^{\mathrm{i}jx}-\sum_{\xi'\in Q}(\tilde{\phi},\xi')\mathrm{e}^{\mathrm{i}Et}\check{\xi'}(t) \right\|_{L^2(\mathbb{T})}\\
     \leq & \left\| \phi(x)-\sum_{\xi'\in Q}(\tilde{\phi},\xi')\sum_{(j,n)\in \Omega'}\xi'(j,n)\mathrm{e}^{\mathrm{i}jx} \right\|_{L^2(\mathbb{T})}\\
     + & \sum_{\xi'\in Q}|(\tilde{\phi},\xi')|\left\|\mathrm{e}^{\mathrm{i}Et}\check{\xi'}(t)-S(t)\check{\xi'}(0)\right\|_{L^2(\mathbb{T})}\\
     \leq & \mathrm{e}^{-\frac{1}{2}(\log T)^{\frac{\sigma''-\delta}{\alpha}}}+|\Lambda|T\mathrm{e}^{-\frac{7}{12}(\log T)^{\frac{\sigma''-\delta}{\alpha}}}\\
     \leq & 2\mathrm{e}^{-\frac{1}{2}(\log T)^{\frac{\sigma''-\delta}{\alpha}}}.\quad(\sigma''>2\alpha+\delta)
  \end{split}
\end{equation}
Then we only need to estimate $\|\sum_{\xi'\in Q}(\tilde{\phi},\xi')\mathrm{e}^{\mathrm{i}Et}\check{\xi'}\|_{H^s}\quad(s>0)$, since intermediate frequencies satisfies
\begin{equation}
  |j|\leq J=T^{10s}=\mathrm{e}^{10s\log T}\leq \mathrm{e}^{10(\log T)^2}.
\end{equation}

Let $\widehat{\widehat{\mathrm{e}^{\mathrm{i}Et}\check{\xi'}}}$ be the Fourier transform of $\mathrm{e}^{\mathrm{i}Et}\check{\xi'}$ with respect to $x$.
\begin{equation}\label{md2}
  \begin{split}
     & \left\|\sum_{\xi'\in Q}(\tilde{\phi},\xi')\mathrm{e}^{\mathrm{i}Et}\check{\xi'}\right\|_{H^s}=\left[ \sum_j|j|^{2s}\left|\sum_{\xi'\in Q}(\tilde{\phi},\xi')\widehat{\widehat{\mathrm{e}^{\mathrm{i}Et}\check{\xi'}}}(j)\right|^2 \right]^{\frac{1}{2}}\\
     = & \left[ \sum_j|j|^{2s}\left|\sum_{k}\sum_{\xi'\in Q}\hat{\phi}(k)\xi'(k,0)\widehat{\widehat{\mathrm{e}^{\mathrm{i}Et}\check{\xi'}}}(j)\right|^2 \right]^{\frac{1}{2}}.
  \end{split}
\end{equation}
From the support of $\xi'$ (\ref{xid2}),
\begin{equation}
  ||j|-|k||\leq 2(\log T)^{\sigma}\quad(\sigma>2).
\end{equation}
Since
\begin{equation}\label{jg}
  |j|>2J_0=8D(\log T)^\sigma \quad (D>2\pi)
\end{equation}
from (\ref{dj0}) (\ref{xid2}) (\ref{jg}) imply
\begin{equation}
  |j|/2<|k|<2|j|.
\end{equation}

We now make a dyadic decomposition of $\phi$. Let $R$ be dyadic and
\begin{equation}
  R/2<|j|<2R.
\end{equation}
So
\begin{equation}
  R/4<|k|<4R.
\end{equation}
Let
\begin{equation}
  \phi_R=\sum_{R/4<|k|<4R}\hat{\phi}(k)\mathrm{e}^{\mathrm{i}kx}.
\end{equation}
We then have
\begin{equation}\label{hs}
  \begin{split}
      (\ref{md2})&\leq \left[ \sum_{R~\text{dyadic}}4^s R^{2s}\sum_{R/2<|j|<2R}\left|\sum_{k}\sum_{\xi'\in Q}\hat{\phi}_R(k)\xi'(k,0)\widehat{\widehat{\mathrm{e}^{\mathrm{i}Et}\check{\xi'}}}(j)\right|^2 \right]^{\frac{1}{2}} \\
     & \leq  \left[ \sum_{R~\text{dyadic}}4^s R^{2s}\left\|\sum_{\xi'\in Q}(\tilde{\phi}_R,\xi')\mathrm{e}^{\mathrm{i}Et}\check{\xi'}\right\|_{L^2(\mathbb{T})}^2 \right]^{\frac{1}{2}}
  \end{split}
\end{equation}
Using (\ref{error}) and since $ \mathrm{supp}\phi_R\subset \mathrm{supp} \phi \subseteq[-J/2,-2J_0]\cup[2J_0,J/2]$,
\begin{equation}\label{l2}
  \begin{split}
     \left\| \sum_{\xi'\in Q}(\tilde{\phi}_R,\xi')\mathrm{e}^{\mathrm{i}Et}\check{\xi'} \right\|_{L^2(\mathbb{T})}&\leq \|S(t)\phi_R\|_{L^2(\mathbb{T})}+2\mathrm{e}^{-\frac{1}{2}(\log T)^{\frac{\sigma''-\delta}{\alpha}}}\|\phi_R\|_{L^2(\mathbb{T})}\\
    & \leq 2\|\phi_R\|_{L^2(\mathbb{T})}\quad(\sigma''>2\alpha+\delta>2).
  \end{split}
\end{equation}
Using (\ref{l2}) in (\ref{hs}), we have
\begin{equation}\label{md3}
  \left\| \sum_{\xi'\in Q}(\tilde{\phi}_R,\xi')\mathrm{e}^{\mathrm{i}Et}\check{\xi'} \right\|_{H^s(\mathbb{T})}\leq \left[ \sum_{R~\text{dyadic}}4^s R^{2s}\cdot4\|\phi_R\|^2_{L^2(\mathbb{T})} \right]^{\frac{1}{2}}\leq C^s\|\phi\|_{H^s}.
\end{equation}
Combining (\ref{md3}) with (\ref{error}), (\ref{TD}), we obtain (\ref{middle}) with a slightly large $C$.
\end{proof}

\section{Estimate of Sobolev Norms on High Frequency}

In order to estimate Sobolev norm on high frequency
\begin{equation}\label{highf}
  \|\Pi_{J/4}S(t)(I-\Pi_{J/2})u_0\|_{H^s}+\|(1-\Pi_{J/4})S(t)u_0\|_{H^s}
\end{equation}
in (\ref{cutoff}), we will use the following lemma
\begin{lem}
For the initial datum $u_0\in H^{s+1}(\mathbb{T})\cap H^{s}(\mathbb{T})$ satisfying the condition (\ref{scon}), and $V$ satisfying  (\ref{grcon2}), we have
\begin{equation}\label{h1}
  \left\|\left[\frac{\partial^\gamma V}{\partial x^\gamma},\Pi_J\right]\right\|_{H^{s-\gamma}\rightarrow H^{s-\gamma}}\leq \frac{Cs!}{J}\quad(J\gg1).
\end{equation}
\begin{equation}\label{h2}
  \|(I-\Pi_J)S(t)\|_{H^s\rightarrow H^s}\leq C|t|+\frac{(C^s(s+1)!)^2}{J}|t|^{s+2}\quad(J>|t|^s),
\end{equation}
\begin{equation}\label{h3}
  \|[S(t),\Pi_J]\|_{H^s\rightarrow H^s}\leq \frac{(C^s(s+1)!)^4}{J}(|t|^{3s+2}+1)\quad(J>|t|^s).
\end{equation}
\end{lem}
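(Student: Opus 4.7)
The plan is to prove the three estimates in order, since (h1) is the basic commutator bound that drives the Duhamel arguments for (h2) and (h3).

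For (h1), I would pass to the Fourier side. The commutator $[\partial^\gamma V/\partial x^\gamma,\Pi_J]$ acts as the matrix $(\widehat{\Pi}_J(k)-\widehat{\Pi}_J(j))\widehat{V^{(\gamma)}}(k-j)$. By (\ref{Pi}) the symbol $\widehat{\Pi}_J$ is piecewise linear with Lipschitz constant $2/J$, so $|\widehat{\Pi}_J(k)-\widehat{\Pi}_J(j)|\le(2/J)|k-j|$. Since $V$ is real analytic in the strip, $|\widehat{V^{(\gamma)}}(m)|\lesssim |m|^\gamma e^{-c|m|}$, hence $\sum_m |m|^{\gamma+1}(1+|m|)^{s-\gamma}|\widehat V(m)|\le C^{s+1}s!$. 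Inserting Sobolev weights $\langle k\rangle^{s-\gamma}\langle j\rangle^{-(s-\gamma)}$ on the kernel and using $\langle k\rangle\le\langle j\rangle+|k-j|$, Schur's test on the resulting convolution kernel gives the $H^{s-\gamma}\to H^{s-\gamma}$ norm bound $Cs!/J$ uniformly in $\gamma\le s$.

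For (h2), set $u(t)=S(t)u_0$ and $v=(I-\Pi_J)u$. Because $\Pi_J$ commutes with $\partial_x^3$,
\begin{equation*}
\partial_t v+v_{xxx}+\frac12 V_x v+V v_x=-\frac12[V_x,\Pi_J]u-[V,\Pi_J]u_x,\qquad v(0)=(I-\Pi_J)u_0.
\end{equation*}
I would rerun the energy/interpolation argument of Lemma 2.2 applied to $v$ with this inhomogeneous source, then close it by Gronwall and Duhamel. The initial contribution is $\|v(0)\|_{H^s}\le(2/J)\|u_0\|_{H^{s+1}}\le 2\widetilde C/J$ since $\widehat v(0)$ lives on $|j|>J/2$. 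The source is controlled by (h1): $(Cs!/J)\|u(\tau)\|_{H^s}+(Cs!/J)\|u(\tau)\|_{H^{s+1}}$, and the Lemma 2.2 polynomial estimates together with (\ref{scon}) bound this by $(C^s(s+1)!)^2|\tau|^{s+1}/J$. The Gronwall growth factor $\exp(\int_0^t\|V_x\|_\infty)$ is polynomial in $|t|$ by (\ref{grcon2}) and is absorbed into the $C|t|$ term; integrating the source in $\tau$ produces the $(|t|^{s+2}/J)$ term.

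For (h3), use the standard Duhamel identity $[S(t),\Pi_J]=-\int_0^t S(t-\tau)[L,\Pi_J]S(\tau)\,d\tau$ with $Lw=-w_{xxx}-\frac12 V_x w-Vw_x$. Since $\Pi_J$ commutes with $\partial_x$, $[L,\Pi_J]=-\frac12[V_x,\Pi_J]-[V,\Pi_J]\partial_x$, and (h1) gives $\|[L,\Pi_J]w\|_{H^s}\le(C(s+1)!/J)(\|w\|_{H^s}+\|w\|_{H^{s+1}})$. Substituting the polynomial bounds of Lemma 2.2 for $\|S(\tau)u_0\|_{H^s}$, $\|S(\tau)u_0\|_{H^{s+1}}$, and for $\|S(t-\tau)\|_{H^s\to H^s}$, and integrating over $[0,t]$, yields the factor $(C^s(s+1)!)^4(|t|^{3s+2}+1)/J$.

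The main obstacle is the transport commutator $[V,\Pi_J]\partial_x$: in contrast to $[V_x,\Pi_J]$, it does not enjoy a pure $1/J$ gain on $H^s$ using only the $H^s$ norm of its argument, since differentiation costs a full power of the frequency. One must therefore pay an extra derivative through the polynomial $H^{s+1}$ bound of Lemma 2.2, which is precisely where the extra regularity assumption (\ref{scon}) is used and where the factorial powers $(C^s(s+1)!)^2$ and $(C^s(s+1)!)^4$ in (h2)–(h3) arise. Careful bookkeeping of these factorials is the only delicate point; everything else is routine energy/Gronwall/Schur arithmetic.
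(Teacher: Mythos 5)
Your proposal is correct and follows essentially the same route as the paper: the Fourier-side kernel bound $|\widehat{\Pi}_J(j')-\widehat{\Pi}_J(j)|\le \min(1,2|j-j'|/J)$ combined with the analytic decay of $\widehat{V}$ and Schur's test for (\ref{h1}); an energy/Gronwall argument for $(I-\Pi_J)u$ with the commutators as inhomogeneous source for (\ref{h2}); and the Duhamel identity together with (\ref{h1}), Lemma 2.2 and the $H^{s+1}$ bound from (\ref{scon}) for (\ref{h3}). You also correctly identify the role of (\ref{scon}) and (\ref{grcon2}) in absorbing the extra derivative from $[V,\Pi_J]\partial_x$ and the Gronwall factor, which is exactly how the paper closes the estimates.
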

\begin{proof}
Let $\widehat{\widehat{V}}$ and $\widehat{\widehat{u}}$ be the partial Fourier transform with respect to $x$. Since
\begin{equation}
  \widehat{\widehat{[V,\Pi_J]u}}=\widehat{\widehat{V}}*\widehat{\Pi}_J\widehat{\widehat{u}}-\widehat{\Pi}_J\widehat{\widehat{V}}*\widehat{\widehat{u}},
\end{equation}
we have
\begin{equation}
  [V,\Pi_J]^{\widehat{}}(j,j')=\widehat{\widehat{V}}(j-j')(\widehat{\Pi}_J(j')-\widehat{\Pi}_J(j)),
\end{equation}
where $\widehat{\Pi}_J$ is defined in (\ref{Pi}). Since $V$ is analytic, periodic in $x$ and $|V(x,t)|<C$ for all $t$,
\begin{equation*}
  |\widehat{\widehat{V}}(j-j')|\leq C\mathrm{e}^{-c|j-j'|},
\end{equation*}
and from (\ref{Pi})
\begin{equation}\label{pipro}
  \begin{split}
     |\widehat{\Pi}_J(j')-\widehat{\Pi}_J(j)|&\leq 1,\quad |j-j'|\geq J/2, \\
     &\leq\frac{2}{J}|j-j'|,\quad|j-j'|<J/2.
  \end{split}
\end{equation}
Using (\ref{pipro}), we have
\begin{equation}
  \begin{split}
     |[V,\Pi_J]^{\widehat{}}(j,j')|&\leq C\mathrm{e}^{-c|j-j'|},\quad|j-j'|\geq J/2\\
     & \leq \frac{2C}{J}|j-j'|\mathrm{e}^{-c|j-j'|},\quad |j-j'|<J/2.
  \end{split}
\end{equation}
From Schur's lemma, we then obtain
\begin{equation}
  \|[V,\Pi_J]\|_{H^s\rightarrow H^s}\leq \frac{Cs!}{J}\quad (J\gg 1).
\end{equation}
It follows that
\begin{equation}\label{vcon}
  \left\|\left[\frac{\partial^\gamma V}{\partial x^\gamma},\Pi_J\right]\right\|_{H^{s-\gamma}\rightarrow H^{s-\gamma}}\leq \frac{Cs!}{J}\quad (J\gg 1).
\end{equation}
Then
\begin{equation}
  \begin{split}
     \frac{\partial}{\partial t}\|(I-\Pi_J)u(t)\|^2_{H^s}&=2\mathrm{Re}\left((I-\Pi_J)\frac{\partial^s}{\partial x^s}u(t),-(I-\Pi_J)\frac{\partial^s}{\partial x^s}\left(\frac{1}{2}V_x(x,t)u+V(x,t)u_x\right)\right)  \\
     &=2\mathrm{Re}\left((I-\Pi_J)\frac{\partial^s}{\partial x^s}u(t),-\frac{3}{2}(I-\Pi_J)V_x\frac{\partial^s u}{\partial x^s}-(I-\Pi_J)V\frac{\partial^{1+s}u}{\partial x^{1+s}}\right)  \\
     &\quad+2\mathrm{Re}\left((I-\Pi_J)\frac{\partial^s}{\partial x^s}u(t),-\frac{1}{2}(I-\Pi_J)\sum_{\substack{\gamma+\beta=s\\\gamma\geq 1}}\frac{\partial^{1+\gamma}V}{\partial x^{1+\gamma}}\frac{\partial^\beta u}{\partial x^\beta}\right)\\
     &\quad+2\mathrm{Re}\left((I-\Pi_J)\frac{\partial^s}{\partial x^s}u(t),-(I-\Pi_J)\sum_{\substack{\gamma+\beta=s\\\gamma\geq 2}}\frac{\partial^\gamma V}{\partial x^\gamma}\frac{\partial^{1+\beta}u}{\partial x^{1+\beta}}\right).\\
  \end{split}
\end{equation}
It follows that
\begin{equation}
  \begin{split}
     &\mathrm{Re}\left((I-\Pi_J)\frac{\partial^s}{\partial x^s}u(t),-\frac{3}{2}(I-\Pi_J)V_x\frac{\partial^s u}{\partial x^s}-(I-\Pi_J)V\frac{\partial^{1+s}u}{\partial x^{1+s}}\right)  \\
     \leq & \left|\mathrm{Re} \left((I-\Pi_J)\frac{\partial^s u}{\partial x^s},-\frac{3}{2}V_x(I-\Pi_J)\frac{\partial^s u}{\partial x^s}-V(I-\Pi_J)\frac{\partial^{1+s}u}{\partial x^{1+s}}\right) \right|\\
     +&\|(1-\Pi_J)u(t)\|_{H^s}\left(\frac{3}{2}[V_x,\Pi_J]\|u(t)\|_{H^s}+[V,\Pi_J]\|u\|_{H^{s+1}}\right)\\
     \leq &\|V_x\|_{L^\infty}\|(I-\Pi_J)u\|_{H^s}^2+\|(I-\Pi_J)u\|_{H^s}\frac{(C^s(s+1)!)^2(|t|^{s+1}+1)}{J},
  \end{split}
\end{equation}
where we use integration by part, (\ref{vcon}) and (\ref{scon}).

Since $V$ is real analytic in $(x,t)$ and bounded in $D$, we have
\begin{equation}
  \left\|\frac{\partial^m{V}}{\partial x^m}\right\|_{L^\infty(\mathbb{T})}\leq C^{m+1}m!,\quad m=0,1,\ldots
\end{equation}
Using this property, (\ref{vcon}) and
\begin{equation}
  \|(I-\Pi_J)u(t)\|_{H^{s-\gamma}}\leq \frac{1}{J^\gamma}\|u(t)\|_{H^s},
\end{equation}
we obtain
\begin{equation}
  \begin{split}
     &\mathrm{Re}\left((I-\Pi_J)\frac{\partial^s}{\partial x^s}u(t),-\frac{1}{2}(I-\Pi_J)\sum_{\substack{\gamma+\beta=s\\\gamma\geq 1}}\frac{\partial^{1+\gamma}V}{\partial x^{1+\gamma}}\frac{\partial^\beta u}{\partial x^\beta}\right)\\
     +&\mathrm{Re}\left((I-\Pi_J)\frac{\partial^s}{\partial x^s}u(t),-(I-\Pi_J)\sum_{\substack{\gamma+\beta=s\\\gamma\geq 2}}\frac{\partial^\gamma V}{\partial x^\gamma}\frac{\partial^{1+\beta}u}{\partial x^{1+\beta}}\right)\\
     \leq & \|(I-\Pi_J)u(t)\|_{H^{s}}\left(\left[\frac{\partial^2V}{\partial x^2},\Pi_J\right]\|u(t)\|_{H^{s-1}}+C^32!\|(I-\Pi_J)u(t)\|_{H^{s-1}}\right)\\
     + &\ldots + \|(I-\Pi_J)u(t)\|_{H^{s}}\left(\left[\frac{\partial^{1+\gamma}V}{\partial x^{1+\gamma}},\Pi_J\right]\|u(t)\|_{H^{s-\gamma}}+C^{\gamma+2}(\gamma+1)!\|(I-\Pi_J)u(t)\|_{H^{s-\gamma}}\right)\\
     + &\ldots + \|(I-\Pi_J)u(t)\|_{H^{s}}\left(\left[\frac{\partial^{1+s}V}{\partial x^{1+s}},\Pi_J\right]\|u(t)\|_{L^2}+C^{s+2}(s+1)!\|(I-\Pi_J)u(t)\|_{L^2}\right)\\
     \leq & \|(I-\Pi_J)u(t)\|_{H^{s}}\frac{(C^{s}(s+1)!)^2(|t|^s+1)}{J}.
  \end{split}
\end{equation}
With the help of (\ref{grcon2}), we have the estimate (\ref{h2}).

To prove (\ref{h3}), assume $u$ is a solution to (\ref{equ})
\begin{equation*}
  u_t+u_{xxx}+\frac{1}{2}V_x(x,t)u+V(x,t)u_x=0,
\end{equation*}
then
\begin{equation}
  \left(\partial_t+\partial_{xxx}\right)\Pi_Ju+\frac{1}{2}\partial_xV\Pi_Ju+V\partial_x\Pi_Ju=\frac{1}{2}[\partial_xV,\Pi_J]u+[V,\Pi_J]u_x
\end{equation}
From \textbf{Lemma 2.3.}
\begin{equation}
  [S(t),\Pi_J]u_0=S(t)\Pi_Ju_0-\Pi_JS(t)u_0=\int_0^tS(t)S(\tau)^{-1}\left(\frac{1}{2}[\partial_xV,\Pi_J]u+[V,\Pi_J]u_x\right)\mathrm{d}\tau,
\end{equation}
Using \textbf{Lemma 2.2.}, (\ref{h1}) and (\ref{scon}), we obtain (\ref{h3}).
\end{proof}

\section{Iteration and Interpolation}
We use the following two decompositions. The first one decomposes into low (\ref{low}) and high frequencies (\ref{high}).
\begin{eqnarray}
  \|S(t)u_0\|_{H^s}\leq&&\|\Pi_{J/4}S(t)u_0\|_{H^s}\label{low}\\
  +&&\|(1-\Pi_{J/4})S(t)u_0\|_{H^s}\label{high}
\end{eqnarray}
The second one decomposes into sub-low (\ref{sub-l}), sub-intermediate (\ref{sub-m}) and sub-high (\ref{sub-h}) frequencies.
\begin{eqnarray}
  \|\Pi_{J/4}S(t)u_0\|_{H^s}\leq&&\|\Pi_{J/4}S(t)\Pi_{2J_0}u_0\|_{H^s}\label{sub-l}\\
 +&&\|\Pi_{J/4}S(t)(\Pi_{J/2}-\Pi_{2J_0})u_0\|_{H^s}\label{sub-m}\\
 +&&\|\Pi_{J/4}S(t)(I-\Pi_{J/2})u_0\|_{H^s}\label{sub-h}.
\end{eqnarray}

Since the choice of $J$
\begin{equation*}
  J=T^{10s},
\end{equation*}
we can use estimate (\ref{h2}) to control high frequencies (\ref{high})
\begin{equation}\label{hest}
  \|(1-\Pi_{J/4})S(T)u_0\|_{H^s}\leq C|T|
\end{equation}
for a large $T$ which we determined later.

Then (\ref{middle}) controls sub-intermediate(\ref{sub-m}) and (\ref{h3}) controls sub-high (\ref{sub-h}) frequencies
\begin{equation}\label{siest}
  \begin{split}
      \|\Pi_{J/4}S(T)(\Pi_{J/2}-\Pi_{2J_0})u_0\|_{H^s}& \leq C^s\|(\Pi_{J/2}-\Pi_{2J_0})u_0\|_{H^s}\\
     & \leq C^s\|u_0\|_{H^s}\leq C^s,
  \end{split}
\end{equation}
\begin{equation}\label{shest}
  \begin{split}
      \|\Pi_{J/4}S(T)(I-\Pi_{J/2})u_0\|_{H^s}&\leq \|\Pi_{J/4}(I-\Pi_{J/2})S(t)u_0\|_{H^s}+\|\Pi_{J/4}[S(T),\Pi_{J/2}]u_0\|_{H^s}\\
     &\leq \|[S(T),\Pi_{J/2}]u_0\|_{H^s}\\
     & \leq \frac{(C^s(s+1)!)^4}{J}(|T|^{3s+2}+1)\|u_0\|_{H^s} \leq 1,
  \end{split}
\end{equation}
where $ \|\Pi_{J/4}(I-\Pi_{J/2})S(t)u_0\|_{H^s}=0.$

So the only work left is to control (\ref{sub-l}), the sub-low frequencies: $|j|\leq 2J_0$, which we do by iterating $S(0,T):=S(T)$, $|T|$ times and each time making again the decomposition as in (\ref{sub-l})-(\ref{sub-h}), using the same estimate as above.

We have
\begin{eqnarray}
  \|\Pi_{J/4}S(0,t)\Pi_{2J_0}u_0\|_{H^s}\leq&&\|\Pi_{J/4}S(1,t)\Pi_{2J_0}S(0,1)\Pi_{2J_0}u_0\|_{H^s}\label{subl-l}\\
 +&&\|\Pi_{J/4}S(1,t)(\Pi_{J/2}-\Pi_{2J_0})S(0,1)\Pi_{2J_0}u_0\|_{H^s}\label{subl-m}\\
 +&&\|\Pi_{J/4}S(1,t)(I-\Pi_{J/2})S(0,1)\Pi_{2J_0}u_0\|_{H^s}\label{subl-h}.
\end{eqnarray}
which is the analogue at $t=1$ of the decomposition in (\ref{sub-l})-(\ref{sub-h}), with $S(0,1)\Pi_{2J_0}u_0$ replacing $u_0$. So we have
\begin{equation}\label{itm}
  (\ref{subl-m})\leq C^s\|S(0,1)\Pi_{2J_0}u_0\|_{H^s}\leq 2C^{2s}(s+1)!,
\end{equation}
where we used
\begin{equation}
  \|S(0,1)\|_{H^s\rightarrow H^s}\leq 2C^s(s+1)!
\end{equation}
from \textbf{Lemma 2.2.}. With the same estimate above, we have
\begin{equation}\label{ith}
  (\ref{subl-h})\leq \frac{(C^s(s+1)!)^4}{J}(|T|^{3s+2}+1)\|S(0,1)\Pi_{2J_0}u_0\|_{H^s}\leq 2C^{2s}(s+1)!.
\end{equation}
(\ref{itm}),(\ref{ith}) are the analogues of (\ref{siest}) and (\ref{shest}), which control sub-intermediate and sub-high frequencies.

If we call (\ref{subl-l})£¬(\ref{subl-m}) and (\ref{subl-h}) as \textbf{1-sub-low}, \textbf{1-sub-intermediate} and \textbf{1-sub-high} frequencies, then using (\ref{itm}),(\ref{ith}), we have after one iteration:
\begin{equation}
\begin{split}
   \|\Pi_{J/4}S(0,t)\Pi_{2J_0}u_0\|_{H^s}& \leq \|\Pi_{J/4}S(1,t)\Pi_{2J_0}S(0,1)\Pi_{2J_0}u_0\|_{H^s} + 4C^{2s}(s+1)!\\
   &= \text{\textbf{1-sub-low}}+4C^{2s}(s+1)!.
\end{split}
\end{equation}
After $r$ iterations, \textbf{$r$-sub-intermediate} satisfies
\begin{equation}
  \begin{split}
     &\textbf{$r$-sub-intermediate}\\
     &=\|\Pi_{J/4}S(r,t)(\Pi_{J/2}-\Pi_{2J_0})S(r-1,r)\Pi_{2J_0}S(r-2,r-1)\Pi_{2J_0}\ldots\Pi_{2J_0}u_0\|_{H^s}\\
     &\leq C^s\|S(r-1,r)\Pi_{2J_0}S(r-2,r-1)\Pi_{2J_0}\ldots\Pi_{2J_0}u_0\|_{H^s}\\
     &\leq C^s\|S(r-1,r)\|_{H^s\rightarrow H^s}\cdot(2J_0)^s\\
     &\leq 2C^{2s}(s+1)!(2J_0)^s;
  \end{split}
\end{equation}
while \textbf{$r$-sub-high} satisfies
\begin{equation}
  \begin{split}
     \textbf{$r$-sub-high}&=\|\Pi_{J/4}S(r,t)(I-\Pi_{J/2})S(r-1,r)\Pi_{2J_0}S(r-2,r-1)\Pi_{2J_0}\ldots\Pi_{2J_0}u_0\|_{H^s}\\
     &\leq \frac{(C^s(s+1)!)^4}{J}(|T|^{3s+2}+1)\|S(r-1,r)\Pi_{2J_0}S(r-2,r-1)\Pi_{2J_0}\ldots\Pi_{2J_0}u_0\|_{H^s}\\
     & \leq\|S(r-1,r)\|_{H^s\rightarrow H^s}\cdot(2J_0)^s\\
     & \leq 2C^{s}(s+1)!(2J_0)^s.
  \end{split}
\end{equation}
After $|T|$ iterations, we then have
\begin{equation}\label{Test}
  \begin{split}
     \|\Pi_{J/4}S(0,t)\Pi_{2J_0}u_0\|_{H^s}&\leq \textbf{$|T|$-sub-low}+4C^{2s}(s+1)!(2J_0)^s|T|\\
     & \leq \|\Pi_{2J_0}S(T-1,T)\Pi_{2J_0}\ldots \Pi_{2J_0}S(r-1,r)\Pi_{2J_0}\ldots\Pi_{2J_0}u_0\|_{H^s}\\
     & \quad +4C^{2s}(s+1)!(2J_0)^s|T|\\
     & \leq |T|(sJ_0)^{s}\cdot C^s
  \end{split}
\end{equation}
with a larger $C$.

Using (\ref{Test}) in (\ref{sub-l}) and combining with (\ref{siest}),(\ref{shest}), we obtain
\begin{equation}\label{Tlest}
  \|\Pi_{J/4}S(0,T)u_0\|_{H^s}\leq |T|(sJ_0)^sC^s.
\end{equation}
Using (\ref{Tlest}) in (\ref{low}), we have
\begin{equation}
     \|S(0,T)u_0\|_{H^s}\leq C^s|T|(sJ_0)^{s}
\end{equation}
for all $0<s\leq \log T$. Interpolating with the $L^2$ bound $\|S(t)\|_{L^2\rightarrow L^2}= 1$(\ref{con}) yields
\begin{equation}
  \begin{split}
    \|S(0,T)\|_{H^{s'}\rightarrow H^{s'}} &  \leq |T|^{\frac{s'}{s}}(CsJ_0)^{s'} \\
     & \leq C^{s'}(\log T)^{(\sigma+1)s'}\quad(\sigma>2)
  \end{split}
\end{equation}
with a larger $C$, for all $0<s'<s$, where we fixed $s=\log|T|$ and used (\ref{dj0}).

For a fixed $s>0$, for $|t|<\mathrm{e}^s$, \textbf{Lemma 2.2} gives
\begin{equation*}
  \|S(0,t)\|_{H^{s'}\rightarrow H^{s'}}\leq C^s(s+1)!e^{s^2},
\end{equation*}
for $|t|>\mathrm{e}^s$, we use (\ref{Tlest}). This gives immediately
\begin{equation}
  \|S(0,t)\|_{H^s\rightarrow H^s}\leq C_s(\log(|t|+2))^{(\sigma+1)s}
\end{equation}
for all $s>0$. Let $\varsigma=\sigma+1$, we obtain\textbf{ Theorem 1.1.}

\end{document}